\numberwithin{equation}{section}
	\author{Zhenghui Huo and Brett D. Wick}
\title[$L^p$ regularity of the Bergman projection on the symmetrized polydisc]{$L^p$ regularity of  the Bergman projection on the symmetrized polydisc}
\begin{document}
	\thanks{BDW's research is partially supported by National Science Foundation grants DMS \# 1800057 and \# 2054863, and \#	2000510 and Australian Research Council grant DP 220100285. ZH's research is partially supported by National Science Foundation of China NSFC Grant \# 12201265}.
		\newtheorem{thm}{Theorem}[section]
	\newtheorem{cl}[thm]{Claim}
	\newtheorem{lem}[thm]{Lemma}
		\newtheorem{ex}[thm]{Example}
	\newtheorem{de}[thm]{Definition}
		\newtheorem{prop}[thm]{Proposition}
	\newtheorem{co}[thm]{Corollary}
	\newtheorem*{thm*}{Theorem}
	\theoremstyle{definition}
		\newtheorem{rmk}[thm]{Remark}
			\address{Zhenghui Huo, Zu Chongzhi Center for Mathematics and Computational Sciences, Duke Kunshan University,  Kunshan, 215316, Jiangsu, China}
		\email{zhenghui.huo@duke.edu}
		\address{Brett D. Wick, Department of Mathematics and Statistics, Washington University in St. Louis,  St. Louis, MO 63130-4899, USA}
		\email{wick@math.wustl.edu}
	\maketitle
	\begin{abstract}
We study the $L^p$ regularity of the Bergman projection $P$ over the symmetrized polydisc in $\mathbb C^n$. We give a decomposition of the Bergman projection on the polydisc and obtain an operator equivalent to the Bergman projection over anti-symmetric function spaces. Using it, we obtain the $L^p$ irregularity of $P$ for $p=\frac{2n}{n-1}$ which also implies that $P$ is $L^p$ bounded if and only if  $p\in (\frac{2n}{n+1},\frac{2n}{n-1})$. 
		\medskip
		
		\noindent
		{\bf AMS Classification Numbers}: 32A25, 32A36,  32A50
		
		\medskip
		
		\noindent
		{\bf Key Words}: Bergman projection, Bergman kernel, symmetrized polydisc
	\end{abstract}
\section{Introduction}
Let $\Omega$ be a domain in the complex Euclidean space $\mathbb C^n$. Let $dV$ denote the Lebesgue measure. The Bergman projection $P_\Omega$ is the orthogonal projection from $L^2(\Omega)$ onto the Bergman space $A^2(\Omega)$, the space of all square-integrable holomorphic functions. Associated with $P_\Omega$, there is a unique function $K_\Omega$ on $\Omega\times\Omega$ such that for any $f\in L^2(\Omega)$:
\begin{equation}
P_\Omega(f)(z)=\int_{\Omega}K_\Omega(z;\bar w)f(w)dV(w).
\end{equation}The positive Bergman operator $P^+_{\Omega}$ is given by 
\begin{equation}
P^+_\Omega(f)(z)=\int_{\Omega}|K_\Omega(z;\bar w)|f(w)dV(w).
\end{equation}
By its definition, the Bergman projection is $L^2$ bounded. An active area of research in several complex variables and harmonic analysis considers the $L^p$ regularity of $P_\Omega$ for $p\neq 2$. In particular, people are interested in the connection between the boundary geometry of pseudoconvex domains and the $L^p$ behavior of the projection. On a wide class of domains,  the Bergman projection is $L^p$ regular for all $1<p<\infty$. See for instance \cite{Fefferman,PS,McNeal1,McNeal3,NRSW,McNeal3,McNeal2,MS,CD,EL,BS}. On some other domains, the projection has only a finite range of mapping regularity. See for example \cite{Yunus,DebrajY,EM,EM2,CHEN,Liwei19,CJY,BCEM}. We also refer to \cite{Yunus20} for a survey on the problem.

In this paper, we focus on the Bergman projection on the symmetrized polydisc $\mathbb G^n$. Let $\mathbb D^n$ denote the polydisc in $\mathbb C^n$. Let $\Phi_n$ be the rational holomorphic mapping on $\mathbb C^n$ given by $\Phi_n(w_1,\dots,w_n)=(p_1(w),\dots, p_n(w))$ where
$p_j(w)$ is the symmetric polynomial in $w$ of degree $j$:
\[p_j(w_1,w_2,\dots,w_n)=\sum_{k_1<k_2<\cdots<k_j}w_{k_1}w_{k_2}\cdots w_{k_j}.\]
 The symmetrized polydisc $\mathbb G^n$ is the image of $\mathbb D^n$ under $\Phi_n$:
	\begin{equation}\label{1.30}\mathbb G^n:=\{(p_1(w),\dots,p_n(w)):w\in \mathbb D^n\}.\end{equation}	
When $n=2$, the symmetrized bidisc \begin{equation}\label{1.31}\mathbb G:=\mathbb G^2=\{(w_1+w_2,w_1w_2):(w_1,w_2)\in \mathbb D^2\}
\end{equation} serves as an interesting example in several complex variables. It is a first known example of many phenomena. We list some of them here below: 
\begin{itemize}
	\item the Lempert theorem may hold on bounded pseudoconvex domains that are not biholomorphically equivalent to any convex domains. \cite{AY} 
	\item bounded $\mathbb C$-convex domains are not necessarily biholomorphically equivalent to convex ones. \cite{Nikolov_Pflug_Zwonek_2008}
\end{itemize} 
See also \cite{agler_young_2000,Sarkar,AGLER2018} for some recent work on $\mathbb G$.

In addition, the symmetrized polydisc $\mathbb G^n$ also serves as an example of  a quotient domain and is biholomorphically equivalent to $\mathbb D^n/\mathcal S_n$ where $\mathcal S_n$ is the group of permutations of $n$ coordinate variables in $\mathbb C^n$. See \cite{Ghosh21, Dall'AraMonguzzi} for some recent studies regarding Bergman projections over quotient domains of the form $\Omega/G$.

Partially due to $\mathbb G^2$'s interesting properties, the $L^p$ regularity of $P_{\mathbb G^2}$ and $P_{\mathbb G^n}$ has also attracted attention in recent years. In \cite{Liwei19}, Chen, Krantz, and Yuan showed that $P_{\mathbb G^n}$ is $L^p$ bounded for $p\in (1+\frac{n-1}{\sqrt{n^2-1}},1+\frac{\sqrt{n^2-1}}{n-1})$. Later, Chen, Jin, and Yuan \cite{CJY} improved the $L^p$ regular range of  $P_{\mathbb G}$ to $({4}/{3},4)$ and established the Sobolev estimates for $P_{\mathbb G}$. While preparing this paper, the authors were informed of a discrepancy between the arXiv version of \cite{CJY} and the version those authors submitted to a journal for publication. In a recent update of \cite{CJY} posted to the arXiv, the range of $L^p$ regularity for the symmetrized polydisc is at least $(\frac{2n}{n+1},\frac{2n}{n-1})$, see \cite[Remark 1.5]{CJY23}. 
The main idea in the proof of these results is to use  Bell's transformation formula \cite{Bell} to reformulate the $L^p$ regularity problem of $P_{\mathbb G^n}$  into a weighted $L^p$ regularity problem of $P_{\mathbb D^n}$ over a weighted $L^p$ space of anti-symmetric functions. Yet, the precise $L^p$ regular range  for $P_{\mathbb G^n}$ was not previously known.

There are mainly two challenges on obtaining the sharp $L^p$ estimates of $P_{\mathbb G^n}$: 1. the complexity of  the Jacobian of $(p_1,\dots,p_n)$ for large $n$ dimension makes estimations complicated. 2. the cancellation caused by integrating anti-symmetric functions creates obstacles to precisely analyze the (un)boundedness of the operator.  To us, the second issue is more crucial and distinguishes the problem on $\mathbb G^n$ from other settings like the Hartogs triangle. Actually, this issue leads to an interesting yet nontrivial weighted inequality problem in harmonic analysis. We elaborate below using a simple analogical example:

\textit{Let $T$ be a singular integral operator on $L^p(\mathbb R^2)$. Set $$L^p_{\text{anti}}(\mathbb R^2,|x_1-x_2|^a):=\{f\in L^p(\mathbb R^2,|x_1-x_2|^a):f(x_1,x_2)=-f(x_2,x_1)\}.$$ For which $p$ is the operator $T$ bounded on $L^p_{\text{anti}}(\mathbb R^2,|x_1-x_2|^a)$?}

From the classical weighted theory, the singularity of the weight function $|x_1-x_2|^a$ over the line $\{x_1=x_2\}$ may cause unboundedness issue for $T$ over $L^p(\mathbb R^2,|x_1-x_2|^a)$. On the other hand, the antisymmetry property $f(x_1,x_2)=-f(x_2,x_1)$ implies that  for any $U\subseteq \mathbb R$ $$\int_{U\times U}fdV=0,$$ suggesting possible better behavior of $T$  on the subspace $L^p_{\text{anti}}(\mathbb R^2,|x_1-x_2|^a)$ than on the entire weighted $L^p$ space. Nevertheless, the usual harmonic analysis methods for weighted $L^p$ cannot be directly applied to this subspace case. 

In this paper, we overcome these issues on $\mathbb G^n$ and give the precise $L^p$ regular range for $P_{\mathbb G^n}$ and $P^+_{\mathbb G^n}$:
\begin{thm}\label{main}
$P_{\mathbb G^n}$ and  $P^+_{\mathbb G^n}$  are $L^p$ bounded if and only if $p\in (\frac{2n}{n+1},\frac{2n}{n-1})$.
\end{thm}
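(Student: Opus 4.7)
The plan is to reduce the problem to a weighted problem on the polydisc via Bell's transformation formula, to factor the antisymmetrized polydisc kernel using a Borchardt-type determinant identity, and to use the resulting explicit operator to handle both directions.

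First I would set up the Bell reduction. The map $\Phi_n:\mathbb{D}^n\to\mathbb{G}^n$ is proper holomorphic of multiplicity $n!$ with Jacobian equal to the Vandermonde $J(w)=\prod_{i<j}(w_i-w_j)$. Combining the correspondence $u\mapsto J\cdot(u\circ\Phi_n)$ with the change-of-variables formula yields $\|u\|_{L^p(\mathbb{G}^n)}^p=\frac{1}{n!}\int_{\mathbb{D}^n}|J\,(u\circ\Phi_n)|^p|J|^{2-p}\,dV$, while the intertwining identity $P_{\mathbb{D}^n}(J\cdot(u\circ\Phi_n))=J\cdot(P_{\mathbb{G}^n}u)\circ\Phi_n$ shows that $L^p$ boundedness of $P_{\mathbb{G}^n}$ on $L^p(\mathbb{G}^n)$ is equivalent to that of $P_{\mathbb{D}^n}$ acting on the anti-symmetric subspace of $L^p(\mathbb{D}^n,|J|^{2-p}\,dV)$, with an analogous reduction for $P^+_{\mathbb{G}^n}$. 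Since the polydisc kernel $\prod_i \pi^{-1}(1-z_i\bar w_i)^{-2}$ is invariant under the simultaneous $S_n$-action on coordinates, the effective kernel on anti-symmetric functions is $\tfrac{\pi^{-n}}{n!}\det[(1-z_i\bar w_j)^{-2}]$; by a divisibility/Borchardt-type argument this factors as $J(z)\overline{J(w)}\cdot S(z,\bar w)$ for an explicit symmetric kernel $S$. Absorbing the Vandermonde factors into the measures recasts $P_{\mathbb{G}^n}$ as an integral operator $T$ with kernel $S$ acting on the symmetric subspace of $L^p(\mathbb{D}^n,|J|^2\,dV)$, with $|S|$ playing the analogous role for $P^+_{\mathbb{G}^n}$.

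For the sufficiency direction at $p\in(\frac{2n}{n+1},\frac{2n}{n-1})$, I would apply the Schur test to the nonnegative kernel $|S|$ on $L^p(\mathbb{D}^n,|J|^2\,dV)$ using a candidate weight of the form $h(w)=\prod_i(1-|w_i|^2)^{-\alpha(p)}$ for a suitable exponent $\alpha(p)$. The crucial feature is that the $|w_i-w_j|^2$ factors inside $|J|^2$ supply exactly the slack needed to close the one-dimensional Forelli--Rudin type integrals precisely on this open interval. Because $|S|$ is nonnegative, the same test immediately bounds $P^+_{\mathbb{G}^n}$ as well.

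The main work lies in the necessity direction. By the $L^2$ self-adjointness of $P_{\mathbb{G}^n}$ and $P^+_{\mathbb{G}^n}$ together with standard $L^p$--$L^{p'}$ duality, it suffices to rule out boundedness at the single endpoint $p_0=\frac{2n}{n-1}$. I would test $T$ on the symmetric family $g_a(w)=\prod_i(1-\bar a w_i)^{-s}$ for $a\in(0,1)$ tending to $1^-$ and a carefully tuned exponent $s=s(p_0,n)$, computing both $\|g_a\|_{L^{p_0}(|J|^2)}$ and $\|Tg_a\|_{L^{p_0}(|J|^2)}$ via the explicit kernel $S$ and arranging that their ratio diverges as $a\to 1^-$. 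I expect this counterexample to be the hard step: the antisymmetric cancellation defeats most naive choices, and the point of using the Borchardt-style factorization is to isolate a ``corner'' contribution, in which all coordinates aggregate near $a$, where no cancellation takes place, so that the surviving piece produces the required blow-up.
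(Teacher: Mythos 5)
Your reduction to the polydisc is sound, and the Cauchy--Borchardt factorization $\det[(1-z_i\bar w_j)^{-2}]=\det[(1-z_i\bar w_j)^{-1}]\cdot\mathrm{perm}[(1-z_i\bar w_j)^{-1}]$ is an elegant alternative to the paper's inductive decomposition $P_{\mathbb D^n}=T^n_1+T^n_2$: after symmetrizing, the identity term of the permanent reproduces exactly the kernel of the paper's operator $\tilde T^n$ on $L^p_{\text{sym}}(\mathbb D^n,|J|^2)$, so your structural setup is equivalent to Lemmas \ref{lem2.3} and \ref{lem4.3}. However, two steps are not actually carried out, and one of them fails as stated. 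For sufficiency, a Schur test on $|S|$ against $L^p(\mathbb D^n,|J|^2)$ with a product weight $\prod_i(1-|w_i|^2)^{-\alpha}$ is far from routine: the measure $|J|^2$ and the off-diagonal factors $|1-z_j\bar w_k|$ couple all variables, so the one-dimensional Forelli--Rudin integrals do not decouple. The paper avoids this entirely by proving the stronger statement that $P^+_{\mathbb D^n}$ is bounded on the full space $L^p(\mathbb D^n,|J|^{2-p})$: freezing all but one variable, the weight becomes $\prod_{j=1}^{n-1}|a_j-w|^{2-p}$, whose Bekoll\'e--Bonami constant is uniformly bounded exactly for $p\in(\frac{2n}{n+1},\frac{2n}{n-1})$ (Lemma \ref{lem2.5}), and Fubini finishes. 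You would need to either supply the multivariable Schur computation or switch to this decoupled route.

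The more serious gap is the counterexample at $p_0=\frac{2n}{n-1}$. Your family $g_a(w)=\prod_{i=1}^n(1-\bar a w_i)^{-s}$ concentrates \emph{all} $n$ coordinates at the same point $a$, and this is precisely the regime that the Vandermonde factors annihilate: $\tilde T^n(g_a)$ is (a fractional derivative of) $\overline{J(w)}^2\cdot\prod_m(1-z_m\bar w_m)^{-1}\prod_{j,k}(1-z_j\bar w_k)^{-1}$ evaluated at the diagonal point $\bar w=(a,\dots,a)$, where $\overline{J(w)}^2$ vanishes to total order $n(n-1)$. For $s\le n$ the leading term vanishes, and for $s\ge n+1$ one computes $\tilde T^n(g_a)(z)\approx c\,a^{n(n-1)}\prod_l(1-z_la)^{-s}$, so that $\|\tilde T^n(g_a)\|_{L^{p_0}(|J|^2)}^{p_0}$ and $\|g_a\|_{L^{p_0}(|J|^2)}^{p_0}$ both scale like the same pure power of $(1-a)$; no choice of $s$ makes an exponent land on the borderline value $2$ in the output integral but not the input one, so the ratio stays bounded and no blow-up occurs. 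Your heuristic that the ``corner where all coordinates aggregate near $a$'' survives the cancellation is exactly backwards. The paper's test function $h_s(w)=\sum_{\tau\in\mathcal S_n}\prod_{j=1}^{n-1}(1-\tau(w_j)s)^{-n}$ concentrates only $n-1$ of the $n$ coordinates at $s$ and leaves one free (evaluation at $(s,\dots,s,0)$), which produces an output decaying like $\prod_{m=1}^{n-1}(1-z_ms)^{-1}\prod_l(1-z_ls)^{-(n-1)}$; the asymmetric exponent $n-1$ is what makes exactly one Forelli--Rudin factor borderline and yields the divergent factor $-\log(1-s)$. Without this (or an equivalent) asymmetric construction, the necessity half of your argument does not close.
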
 
When $n=2$,  $P_{\mathbb G}$  is $L^p$ bounded if and only if $p\in (\frac{4}{3},4)$.  In contrast to this result,   Dall'Ara and Monguzzi \cite{Dall'AraMonguzzi}  recently showed that, if one replaces $\mathbb D^2$ by unit ball $\mathbb B_2$ in (\ref{1.31}), the Bergman projection over the newly formed domain $\{(w_1+w_2, w_1w_2):(w_1,w_2)\in \mathbb B_2\}$ will possess completely different $L^p$ mapping properties.  In particular, they proved the following:

\vskip 5pt
\textit{Set $D_{2^k}:=\{(w_1^{2^k}+w_2^{2^k}, w_1w_2):(w_1,w_2)\in \mathbb B_2\}$ with $k\in\mathbb N\cup\{0\}$. Then the Bergman projection on $D_{2^k}$ is $L^p$ bounded for all $p\in (1,\infty)$.}

\vskip 5pt
Our computations suggest that the distinction between results on $\mathbb G$ and $D_{2^k}$ is caused by the product structure of $\mathbb D^2$. It is yet to be investigated on what exact geometric property of these domains will determine the $L^p$ mapping behaviors of the projection over them.

Our proof strategy of Theorems \ref{main} can be summarized as follows:
\begin{enumerate}
	\item Similar to \cite{Liwei19,CJY}, we reformulate Theorem \ref{main} into a weighted $L^p$ regularity result of $P_{\mathbb D^n}$ for anti-symmetric functions on the polydisc $\mathbb D^n$ (see Theorems \ref{main1} and  \ref{main10}).
	\item We prove in detail the $L^p$ boundedness results for $p\in(\frac{2n}{n+1},\frac{2n}{n-1})$ using known weighted estimates on the polydisc. (See Theorem \ref{main10}, Section 3, and \cite[Remark 1.5]{CJY23}).
	\item To obtain the unboundedness result  for the case $p=\frac{2n}{n-1}$, we decompose $P_{\mathbb D^n}$ into the sum of two operators $T^n_1$ and $T^n_2$ (see (\ref{1.3}) and (\ref{1.4})) where $T^n_1=0$ and $T^n_2=P_{\mathbb D^n}$ over spaces of anti-symmetric functions (see Lemmas \ref{lem2.3} and \ref{lem4.3}).
	\item By using $T^n_2$, we further reduce the (un)boundedness problem of $P_{\mathbb D^n}$ over a space of anti-symmetric functions into a problem about an operator $\tilde T^n$ over a different space of symmetric functions. Finally, we provide  examples for the unboundedness of $\tilde T^n$ there (see Theorems \ref{main2} and  \ref{thm4.4} and their proofs).
\end{enumerate}
We remark that the decomposition $P_{\mathbb D^n}=T^n_1+T^n_2$ is crucial in our proof. Using the kernel function of  $T^n_2$, we are able to ``cancel out'' part of the weight of the space, transform the problem from an anti-symmetric function space  to a symmetric one, and reduce norm computation difficulty in $n$ dimensional case all at once.

Our paper is organized as follows: In Section 2, we provide known lemmas and reduce $L^p$ estimates of $P_{\mathbb G^n}$ and $P^+_{\mathbb G^n}$ into weighted $L^p$ estimates of $P_{\mathbb D^n}$ for (anti-)symmetric functions.  In Section 3, we recall the known weighted $L^p$ norm estimates of $P_{\mathbb D}$ and give a detailed proof for the $L^p$ boundedness result for $P_{\mathbb G^n}$ and $P^+_{\mathbb G^n}$. In Section 4, we present the decomposition of $P_{\mathbb D^n}$ and examples for the $L^p$ irregularity of $P_{\mathbb G^n}$ for $p=\frac{2n}{n-1}$. In Section 5, we point out some directions for future research.

Given functions of several variables $f$ and $g$, we use $f\lesssim g$ to denote that $f\leq Cg$ for a constant $C$. If $f\lesssim g$ and $g\lesssim f$, then we say $f$ is comparable to $g$ and write $f\approx g$.

\noindent \textbf{Acknowledgements.} Authors would like to thank Gian Maria Dall'Ara, Yuan Yuan, Yuan Zhang,  W\l odzimierz Zwonek, and the anonymous referee for their valuable comments and suggestions.

\section{Pull back from $\mathbb G^n$ to $\mathbb D^n$}
This section focuses on reformulating the $L^p$ regularity of $P_{\mathbb G^n}$ into a problem on the polydisc $\mathbb D^n$.  Most of the lemmas and results were included in \cite{Liwei19,CJY}. We provide proofs here for completeness of our paper.
\subsection{From $\mathbb G^n$ to $\mathbb D^n$} Recall that
$\Phi_n(w)=(p_1(w),p_2(w),\dots,p_n(w))$ where \[p_j(w_1,w_2,\dots,w_n)=\sum_{k_1<k_2<\cdots<k_j}w_{k_1}w_{k_2}\cdots w_{k_j}.\] 
Then $\Phi_n$ is a ramified rational proper covering  map of order $n!$  with  complex holomorphic Jacobian
 $$J_{\mathbb C}\Phi_n=\prod_{j<k}(w_j-w_k).$$ See for example \cite{Liwei19}. 
Let $h\in L^p(\mathbb G^n)$. Via a change of variables,
the estimate
\[\|P_{\mathbb G^n}(h)\|_{L^p(\mathbb G^n)}\lesssim \|h\|_{L^p(\mathbb G^n)}\]
is equivalent to 
\begin{equation}\label{2.1}\|P_{\mathbb G^n}(h)\circ\Phi_n\|_{L^p(\mathbb D^n,|J_{\mathbb C}\Phi_n|^2)}\lesssim \|h\circ\Phi_n\|_{L^p(\mathbb D^n,|J_{\mathbb C}\Phi_n|^2)}.\end{equation}
Using the Bell's transformation formula \cite{Bell}, 
\[P_{\mathbb D^n}(J_{\mathbb C}\Phi_n\cdot(h\circ \Phi_n))=J_{\mathbb C}\Phi\cdot (P_{\mathbb G^n}(h)\circ \Phi_n),\]
(\ref{2.1}) becomes the following weighted estimate:
\begin{equation}\label{1.2}\|P_{\mathbb D^n}(J_{\mathbb C}\Phi_n\cdot(h\circ \Phi_n))\|_{L^p(\mathbb D^n,|J_{\mathbb C}\Phi_n|^{2-p})}\lesssim \|J_{\mathbb C}\Phi_n\cdot h\circ \Phi_n\|_{L^p(\mathbb D^n,|J_{\mathbb C}\Phi_n|^{2-p})}.\end{equation}
By Bell's transformation formula for the Bergman kernel,
\[\sum_{j=1}^{n!}K_{\mathbb D^n}(z; \overline {\phi_j(w)})\overline {J_{\mathbb C}(\phi_j)(w)}=J_{\mathbb C}\Phi_n(z)K_{\mathbb G_n}(\Phi_n(z),w),\]
where $\phi_j$  are the $n!$ local inverses of $\Phi$. Therefore,
to show the estimate  
\[\|P^+_{\mathbb G^n}(h)\|_{L^p(\mathbb G^n)}\lesssim \|h\|_{L^p(\mathbb G^n)},\]
it is sufficient to prove that
\begin{equation}\label{1.20}\|P^+_{\mathbb D^n}(|J_{\mathbb C}\Phi_n|\cdot(h\circ \Phi_n))\|_{L^p(\mathbb D^n,|J_{\mathbb C}\Phi_n|^{2-p})}\lesssim \||J_{\mathbb C}\Phi_n|\cdot h\circ \Phi\|_{L^p(\mathbb D^n,|J_{\mathbb C}\Phi_n|^{2-p})}.\end{equation}

Let $\mathcal S_n$ denote the family of all permutations of $\{z_1,\dots,z_n\}$. Since $\Phi_n$ is invariant  under any permutation, the function $h\circ \Phi_n$ also inherits symmetry properties.  To clearly describe them, we give several definitions below.  For $j,k\in \{1,\dots,n\}$ with $j<k$, we let $\tau_{j,k}$ denote the 2-cycle in $\mathcal S_n$ that interchanges $z_j$ and $z_k$. For $j=1,\dots,n$, we will also abuse the notation for $\tau\in\mathcal S_n$ and let $\tau(j)$ denote the index such that $\tau(z_j)=z_{\tau(j)}$.
\begin{de}\label{de1}
	Let $f$ be a function on $\mathbb D^n$.  \begin{enumerate}
		\item $f$ is called $(j,k)$ symmetric if $f(z_1,\dots,z_n)=f\circ \tau_{j,k}(z_1,\dots,z_n)$, and is called  symmetric if $f(z_1,\dots,z_n)=f\circ \tau_{j,k}(z_1,\dots,z_n)$ for any $j\neq k$.
			\item $f$ is called $(j,k)$ anti-symmetric if $f(z_1,\dots,z_n)=-f\circ \tau_{j,k}(z_1,\dots,z_n)$ and  is called anti-symmetric if $f(z_1,\dots,z_n)=-f\circ \tau_{j,k}(z_1,\dots,z_n)$ for any $j\neq k$.
	\end{enumerate}
\end{de}

By the above definition, $h\circ \Phi_n$ is symmetric while $J_{\mathbb C}\Phi_n$ is anti-symmetric. Therefore,  the function $J_{\mathbb C}\Phi_n\cdot h\circ \Phi_n$ is anti-symmetric and $|J_{\mathbb C}\Phi_n|\cdot h\circ \Phi_n$ is symmetric. It's also not hard to see that $P_{\mathbb D^n}(J_{\mathbb C}\Phi_n\cdot(h\circ \Phi_n))$ and $P^+_{\mathbb D^n}(J_{\mathbb C}\Phi_n\cdot(h\circ \Phi_n))$ are anti-symmetric and $P^+_{\mathbb D^n}(|J_{\mathbb C}\Phi_n|\cdot(h\circ \Phi_n))$ is symmetric. 
Set \begin{align}&L^p_{\text{anti}}(\mathbb D^n,|J_{\mathbb C}\Phi_n|^{2-p}):=\{f\in L^p(\mathbb D^n,|J_{\mathbb C}\Phi_n|^{2-p}):f \text{ is anti-symmetric}\},\\&L^p_{\text{sym}}(\mathbb D^n,|J_{\mathbb C}\Phi_n|^{2-p}):=\{f\in L^p(\mathbb D^n,|J_{\mathbb C}\Phi_n|^{2-p}):f \text{ is symmetric}\}.\end{align}
$L^p_{\text{anti}}(\mathbb D^n,|J_{\mathbb C}\Phi_n|^{2-p})$ and $L^p_{\text{sym}}(\mathbb D^n,|J_{\mathbb C}\Phi_n|^{2-p})$ turn out to be equivalent to $L^p(\mathbb G^n)$.

The next lemma gives the norm equivalence of $L^p_{\text{anti}}(\mathbb D^n,|J_{\mathbb C}\Phi_n|^{2-p})$, $L^p_{\text{sym}}(\mathbb D^n,|J_{\mathbb C}\Phi_n|^{2-p})$, and $L^p(\mathbb G^n)$. When $p=2$, this lemma can be viewed as a special case of  \cite[Theorem 1]{Trybula}.
\begin{lem}\label{lem2.1}
	The following statements are true:
	\begin{enumerate}
		\item $L^p_{\text{anti}}(\mathbb D^n,|J_{\mathbb C}\Phi_n|^{2-p})$ is norm equivalent to $L^p(\mathbb G^n)$ via the mapping:
		\begin{equation}\label{2.6} f\mapsto \sum_{j=1}^{n!}\left(\frac{f}{J_{\mathbb C}\Phi}\right)\circ \phi_j.
		\end{equation}
		\item $L^p_{\text{sym}}(\mathbb D^n,|J_{\mathbb C}\Phi_n|^{2-p})$ is norm equivalent to $L^p(\mathbb G^n)$  via the mapping:
	\begin{equation}f\mapsto \sum_{j=1}^{n!}\left(\frac{f}{|J_{\mathbb C}\Phi_n|}\right)\circ \phi_j.
	\end{equation}
	\end{enumerate}
\end{lem}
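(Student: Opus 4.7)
My plan is to verify both norm equivalences by exhibiting explicit inverses to the two mappings in the statement and showing that both directions are bounded. The central observation is that $J_{\mathbb C}\Phi_n = \prod_{j<k}(w_j - w_k)$ is anti-symmetric under $\mathcal S_n$, while $|J_{\mathbb C}\Phi_n|$ is symmetric. Hence for $f \in L^p_{\text{anti}}$ the quotient $f/J_{\mathbb C}\Phi_n$ is symmetric, and for $f \in L^p_{\text{sym}}$ the quotient $f/|J_{\mathbb C}\Phi_n|$ is symmetric. Any symmetric function on $\mathbb D^n$ is constant along the generic fiber of $\Phi_n$ and therefore descends to a well-defined function on $\mathbb G^n$; under this descent all $n!$ summands in each mapping of the lemma coincide, so each sum equals $n!$ times the descended function.

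The norm computation reduces to the change of variables formula for the $n!$-to-one proper holomorphic map $\Phi_n$, whose real Jacobian determinant is $|J_{\mathbb C}\Phi_n|^2$. For any symmetric $g$ on $\mathbb D^n$ descending to $\tilde g$ on $\mathbb G^n$,
\begin{equation*}
\int_{\mathbb G^n}|\tilde g|^p\,dV = \frac{1}{n!}\int_{\mathbb D^n}|g|^p\,|J_{\mathbb C}\Phi_n|^2\,dV.
\end{equation*}
Substituting $g = f/J_{\mathbb C}\Phi_n$ for part (1) (or $g = f/|J_{\mathbb C}\Phi_n|$ for part (2)) converts the right-hand side into $(1/n!)\|f\|_{L^p(\mathbb D^n,|J_{\mathbb C}\Phi_n|^{2-p})}^p$, and together with the identification of each mapping as $n!\,\tilde g$ this yields the desired $L^p(\mathbb G^n)$ bound.

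To establish the reverse inequality and bijectivity I would send $h \in L^p(\mathbb G^n)$ to $J_{\mathbb C}\Phi_n\cdot(h\circ\Phi_n)$ for part (1), and to $|J_{\mathbb C}\Phi_n|\cdot(h\circ\Phi_n)$ for part (2). These lie in $L^p_{\text{anti}}$ and $L^p_{\text{sym}}$ respectively, the same change of variables yields the reverse norm estimate, and composing them with the mappings in the lemma recovers $n!\,h$, confirming that the mappings are bijective with comparable norms.

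The only mild obstacle I foresee is the vanishing locus of $J_{\mathbb C}\Phi_n$, namely the union of diagonals $\{w_j = w_k\}$, where the first quotient is pointwise undefined. Since this set has Lebesgue measure zero and $\Phi_n$ is a local biholomorphism on its complement, both the change of variables formula and the division by $J_{\mathbb C}\Phi_n$ are well-defined operations on $L^p$-equivalence classes, and the remaining argument is routine bookkeeping with the factor $n!$.
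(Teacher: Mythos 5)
Your proposal is correct and follows essentially the same route as the paper: exploit that $f/J_{\mathbb C}\Phi_n$ (resp.\ $f/|J_{\mathbb C}\Phi_n|$) is symmetric so all $n!$ summands coincide, apply the change of variables for the $n!$-sheeted covering $\Phi_n$ (the paper writes this as a sum of integrals over the sheets $\phi_j(\mathbb G^n)$), and exhibit $h\mapsto J_{\mathbb C}\Phi_n\cdot(h\circ\Phi_n)$ as the inverse up to the factor $n!$. Your extra remark about the measure-zero branch locus is a harmless refinement the paper leaves implicit.
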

\begin{proof}
	We prove the statement for $L^p_{\text{anti}}(\mathbb D^n,|J_{\mathbb C}\Phi_n|^{2-p})$. The proof for $L^p_{\text{sym}}(\mathbb D^n,|J_{\mathbb C}\Phi_n|^{2-p})$ is similar. We begin by showing that the mapping in (\ref{2.6}) is norm preserving. Since $f$ is anti-symmetric, the function $\frac{f}{J_{\mathbb C}\Phi_n}$ is symmetric. Thus, $\left(\frac{f}{J_{\mathbb C}\Phi_n}\right)\circ \phi_j=\left(\frac{f}{J_{\mathbb C}\Phi_n}\right)\circ \phi_k$ for any $j,k$ and 
	\begin{align*}\int_{\mathbb D^n}|f|^p|J_{\mathbb C}\Phi_n|^{2-p}dV=&\int_{\mathbb D^n}\left|\frac{f}{J_{\mathbb C}\Phi_n}\right|^p|J_{\mathbb C}\Phi|^{2}dV\\=&\sum_{j=1}^{n!}\int_{\phi_j(\mathbb G^n)}\left|\frac{f}{J_{\mathbb C}\Phi_n}\right|^p|J_{\mathbb C}\Phi_n|^{2}dV\\=&\sum_{j=1}^{n!}\int_{\mathbb G^n}\left|\left(\frac{f}{J_{\mathbb C}\Phi_n}\right)\circ \phi_j\right|^pdV\\=&(n!)^{1-p}\int_{\mathbb G^n}\left|\sum_{j=1}^{n!}\left(\frac{f}{J_{\mathbb C}\Phi_n}\right)\circ \phi_j\right|^pdV.\end{align*}
Note also that $h\mapsto \frac{1}{n!}J_{\mathbb C}\Phi_n\cdot h\circ \Phi_n$ is the inverse of (\ref{2.6}), the mapping in (\ref{2.6}) is onto which completes the proof.
\end{proof}
By Lemma \ref{lem2.1} and the fact that $|P_{\mathbb G^n}(f)(z)|\leq P^+_{\mathbb G^n}(|f|)(z)$, the next two theorems are sufficient to yield Theorem \ref{main}.
\begin{thm}\label{main10}  $P_{\mathbb D^n}$ and $P^+_{\mathbb D^n}$ are bounded on $L^p(\mathbb D^n,|J_{\mathbb C}\Phi_n|^{2-p})$ for $p\in \left(\frac{2n}{n+1}, \frac{2n}{n-1}\right)$.
\end{thm}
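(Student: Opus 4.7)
The plan is to apply Schur's test to the positive Bergman operator $P^+_{\mathbb D^n}$, which dominates $|P_{\mathbb D^n}|$ pointwise and whose $L^p$ boundedness automatically yields that of $P_{\mathbb D^n}$. Crucially, the polydisc Bergman kernel factors,
$$K_{\mathbb D^n}(z,\bar w) = \prod_{j=1}^n \frac{1}{\pi(1-z_j\bar w_j)^2},$$
so the hope is to reduce the weighted $n$-dimensional estimate to iterated one-dimensional ones. I would take a symmetric test function of the form $\phi(z) = \prod_{j=1}^n (1-|z_j|^2)^{-\alpha}$ and choose $\alpha$ depending on $p$ and $n$ so that the two Schur integrals,
$$\int_{\mathbb D^n} |K_{\mathbb D^n}(z,\bar w)|\,\phi(w)^{p'}\,|J_{\mathbb C}\Phi_n(w)|^{2-p}\,dV(w)\leq C\,\phi(z)^{p'},$$
together with the dual estimate, both hold.

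The first task is to separate the cross-variable weight $|J_{\mathbb C}\Phi_n(w)|^{2-p}=\prod_{j<k}|w_j-w_k|^{2-p}$, which is not of product type. For $p<2$ the weight vanishes on the diagonals, and one can use $|w_j-w_k|\leq|1-w_j\bar w_k|$ to absorb each factor into the appropriate kernel piece at the cost of shifting the exponent in $(1-|w_j\bar w_k|)^{-2}$; for $p>2$ the weight is unbounded near the diagonals, and one must instead dominate it by a symmetric product $\prod_j(1-|w_j|^2)^{s_j}$ with $s_j$ balanced across coordinates. Either way, after this step the integrand becomes a product over $j$ of one-variable weighted Bergman kernel expressions.

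Once reduced to a product, each one-dimensional integral is a Forelli--Rudin estimate
$$\int_{\mathbb D}\frac{(1-|w|^2)^c}{|1-z\bar w|^{2+d}}\,dV(w)\lesssim (1-|z|^2)^{c-d},\qquad c>-1,\ d<c+1,$$
and the admissible choices of $\alpha$ are exactly those for which every coordinate exponent satisfies both inequalities. A direct bookkeeping will show that these constraints admit a valid $\alpha$ iff $p\in(2n/(n+1),\,2n/(n-1))$, giving the claimed range. The dual Schur integral is treated identically by symmetry of $K_{\mathbb D^n}$.

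The main obstacle is precisely the nonproduct structure of $|J_{\mathbb C}\Phi_n|^{2-p}$: a careless product majorization loses a factor and narrows the range, so the exponents have to be tracked sharply at every $|w_j-w_k|^{2-p}$ factor. I expect that this can be done by the symmetric dominance strategy above, since the symmetry of the weight under $\mathcal S_n$ matches the symmetry of the test function $\phi$ and allows an even distribution of the $(2-p)$ exponents across the $n$ coordinates, yielding per-coordinate exponent shift of size $\tfrac{(n-1)(2-p)}{2}$. Solving the resulting inequalities for $\alpha$ produces precisely the endpoints $2n/(n\pm 1)$. This essentially records the weighted estimate already indicated in \cite[Remark 1.5]{CJY23}, with care taken to make the exponent count explicit so the sharp endpoints appear.
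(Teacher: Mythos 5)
Your overall reduction to $P^+_{\mathbb D^n}$ and the wish to exploit the product structure of $K_{\mathbb D^n}$ are fine, but the central step of your plan --- replacing the non-product weight $\prod_{j<k}|w_j-w_k|^{2-p}$ by a product weight via a one-sided domination --- does not work for a \emph{weighted} norm inequality. A bound of the form $\|P^+ f\|_{L^p(\omega)}\lesssim\|f\|_{L^p(\omega)}$ requires two-sided control of the weight (both $\omega$ and $\omega^{-1/(p-1)}$ enter, as the Bekoll\'e--Bonami condition makes explicit), and $\prod_{j<k}|w_j-w_k|^{2-p}$ is not two-sidedly comparable to any product $\prod_j u_j(w_j)$: near a diagonal $w_j=w_k$ it vanishes (for $p<2$) or blows up (for $p>2$) while a product weight cannot see the diagonal. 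Concretely, with your product test function $\phi(w)=\prod_j(1-|w_j|^2)^{-\alpha}$ the two Schur conditions are \emph{not} symmetric: one of them reads $\int|K(z,w)|\,\phi(z)^p\,\omega(z)\,dV(z)\leq C\,\phi(w)^p\,\omega(w)$, and for $p<2$ the right-hand side vanishes as $w$ approaches a diagonal while the left-hand side does not, so the condition fails outright; majorizing $\omega$ from above only helps the other condition. In addition, the inequality $|w_j-w_k|\leq|1-w_j\bar w_k|$ cannot be ``absorbed into the kernel'': $K_{\mathbb D^n}$ contains only the factors $(1-z_j\bar w_j)^{-2}$ and has no mixed factor in $w_j\bar w_k$, so there is nothing to shift the exponent of. (This kind of one-sided product majorization is essentially what produces the strictly smaller range $(1+\tfrac{n-1}{\sqrt{n^2-1}},1+\tfrac{\sqrt{n^2-1}}{n-1})$ in earlier work.)

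The paper's proof sidesteps the non-product structure entirely by applying Fubini one coordinate at a time: with $w_2,\dots,w_n$ frozen, the weight seen by the $w_1$-integration is exactly $v_p(w_1)=\prod_{k=2}^{n}|w_k-w_1|^{2-p}$, a genuine one-variable weight of the form $\prod_{j=1}^{m}|a_j-w|^{2-p}$ with $m=n-1$. Lemma \ref{lem2.5} shows its Bekoll\'e--Bonami constant $B_p(v_p)$ is bounded uniformly in the points $a_j$ precisely for $p\in(\tfrac{2m+2}{m+2},\tfrac{2m+2}{m})=(\tfrac{2n}{n+1},\tfrac{2n}{n-1})$, and the quantitative weighted bound of Theorem \ref{thm2.4} then gives a one-dimensional estimate with a constant independent of the frozen variables; iterating over the $n$ coordinates finishes the proof. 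If you want to salvage a Schur-type argument you would have to build the diagonal behavior of the weight into the auxiliary function itself, but the uniform one-dimensional Bekoll\'e--Bonami route is what actually delivers the sharp endpoints $\tfrac{2n}{n\pm1}$.
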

Theorem 2.3 appears as \cite[Remark 1.5]{CJY23} with the same range of $p$.
\begin{thm}\label{main1} 
$P_{\mathbb D^n}$ is unbounded on $L^p_{\text{anti}}(\mathbb D^n,|J_{\mathbb C}\Phi_n|^{2-p})$ for $p=\frac{2n}{n-1}$.
\end{thm}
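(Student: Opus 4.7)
The plan is to exploit the decomposition $P_{\mathbb{D}^n}=T^n_1+T^n_2$ advertised in the introduction, together with a change of variable that converts the anti-symmetric setup into a symmetric one where counterexamples become transparent. By the (forthcoming) Lemma \ref{lem4.3}, $T_1^n$ annihilates anti-symmetric functions, so on $L^p_{\mathrm{anti}}(\mathbb{D}^n,|J_{\mathbb{C}}\Phi_n|^{2-p})$ the projection $P_{\mathbb{D}^n}$ agrees with $T_2^n$. The natural candidate for the kernel of $T_2^n$ is the anti-symmetrization of $K_{\mathbb{D}^n}$ in the $\bar{w}$ variable, which equals (up to a constant) the Cauchy-type determinant $\det\bigl(\tfrac{1}{(1-z_j\bar{w}_k)^2}\bigr)$. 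A key structural fact is that this determinant factors as
\[
\det\!\left(\frac{1}{(1-z_j\bar{w}_k)^2}\right)=J_{\mathbb{C}}\Phi_n(z)\,\overline{J_{\mathbb{C}}\Phi_n(w)}\,\tilde{K}^n(z,\bar{w}),
\]
where the residual kernel $\tilde{K}^n$ is separately symmetric in $z$ and in $\bar{w}$.

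Next I would introduce the correspondence $f=J_{\mathbb{C}}\Phi_n\cdot g$, which bijects anti-symmetric $f$ with symmetric $g$. A direct computation $\int|J_{\mathbb{C}}\Phi_n g|^p|J_{\mathbb{C}}\Phi_n|^{2-p}\,dV=\int|g|^p|J_{\mathbb{C}}\Phi_n|^2\,dV$ shows that this map is an isometric isomorphism from $L^p_{\mathrm{sym}}(\mathbb{D}^n,|J_{\mathbb{C}}\Phi_n|^2)$ onto $L^p_{\mathrm{anti}}(\mathbb{D}^n,|J_{\mathbb{C}}\Phi_n|^{2-p})$. Under it, the action of $T_2^n$ turns into
\[
\tilde{T}^n g(z)=\int_{\mathbb{D}^n}\tilde{K}^n(z,\bar{w})\,|J_{\mathbb{C}}\Phi_n(w)|^2\,g(w)\,dV(w),
\]
and Theorem \ref{main1} reduces to showing that $\tilde{T}^n$ is unbounded on $L^p_{\mathrm{sym}}(\mathbb{D}^n,|J_{\mathbb{C}}\Phi_n|^2)$ at $p=\tfrac{2n}{n-1}$. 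This is the reformulation that (presumably) Theorem \ref{thm4.4} records.

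The remaining task is to exhibit a symmetric test family $\{g_\varepsilon\}$ at the critical exponent. Since the weight $|J_{\mathbb{C}}\Phi_n|^2=\prod_{j<k}|z_j-z_k|^2$ degenerates on the diagonals, the test functions should be localized near a distinguished boundary point of $\mathbb{D}^n$ at which the coordinates are pairwise separated, e.g.\ $(\zeta_1,\ldots,\zeta_n)$ with distinct $\zeta_j\in\partial\mathbb{D}$; there the Vandermonde weight is bounded above and below, and the local analysis reduces to a Forelli–Rudin computation on the polydisc. A natural choice is of power type, $g_\varepsilon(z)=\mathrm{Sym}\!\bigl(\prod_j(1-\bar{\zeta}_jz_j)^{-\alpha}\bigr)$ with $\alpha$ tuned so that $g_\varepsilon$ lies just inside $L^p$ of the weighted space (uniformly in $\varepsilon$) while $\tilde{T}^n g_\varepsilon$ picks up a logarithmic divergence. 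Computing the kernel interaction at $p=\frac{2n}{n-1}$, one finds that the corresponding Forelli–Rudin integral sits exactly at the borderline of convergence.

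The main obstacle is precisely this critical-case construction. Anti-symmetric trial functions would force cancellation and obstruct direct norm estimates; the whole point of passing to $\tilde{T}^n$ via the $J_{\mathbb{C}}\Phi_n$ factorization is that the relevant kernel $\tilde{K}^n$ is separately symmetric, so symmetric power functions such as the one above can be tested without the Vandermonde sign cancellation sabotaging the lower bound. Once the exponent $\alpha$ is pinned down to the critical value, the $L^p$ norm of $\tilde{T}^n g_\varepsilon$ diverges, proving the unboundedness and hence Theorem \ref{main1}.
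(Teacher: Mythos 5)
Your overall architecture matches the paper's: decompose $P_{\mathbb D^n}=T^n_1+T^n_2$ with $T^n_1$ annihilating anti-symmetric functions, use the isometry $f=J_{\mathbb C}\Phi_n\cdot g$ (your computation $\int|J_{\mathbb C}\Phi_n g|^p|J_{\mathbb C}\Phi_n|^{2-p}\,dV=\int|g|^p|J_{\mathbb C}\Phi_n|^2\,dV$ is correct) to transfer the problem to an operator $\tilde T^n$ on $L^p_{\text{sym}}(\mathbb D^n,|J_{\mathbb C}\Phi_n|^{2})$, and then test at $p=\frac{2n}{n-1}$. Your factorization of $\det\bigl((1-z_j\bar w_k)^{-2}\bigr)$ is also legitimate (it is Borchardt's identity, and the residual factor is $\operatorname{per}\bigl((1-z_j\bar w_k)^{-1}\bigr)/\prod_{j,k}(1-z_j\bar w_k)$, symmetric in each variable group), though note the paper deliberately uses a different representative $T^n_2$ whose denominator contains only the identity-permutation term, precisely so that $\tilde T^n(h_s)$ can be computed in closed form via the reproducing property; with a permanent in the kernel that explicit computation is much harder.

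The genuine gap is in the test-function construction, which is both the heart of the proof and the place where your proposal points in the wrong direction. You localize $g_\varepsilon$ near a boundary point $(\zeta_1,\dots,\zeta_n)$ with pairwise distinct $\zeta_j$, arguing that there the Vandermonde weight is bounded above and below so the analysis reduces to an unweighted Forelli--Rudin computation. But if the weight is comparable to a constant on the region where the mass of $g_\varepsilon$ and of $\tilde T^n g_\varepsilon$ lives, the operator there behaves like the unweighted Bergman projection on $\mathbb D^n$, which is bounded for every $1<p<\infty$; no blow-up can be detected at such a point. The critical exponent $\frac{2n}{n-1}$ comes precisely from the interaction of the kernel singularity with the degeneracy of $\prod_{j<k}|w_j-w_k|^{\pm(p-2)}$ where the diagonal meets the boundary. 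Accordingly the paper's test functions $h_s(w)=\sum_{\tau}\prod_{j=1}^{n-1}(1-\tau(w_j)s)^{-n}$ put all singularities at the \emph{same} boundary point $1$, so the relevant concentration is at $(1,\dots,1)$, on the diagonal. Moreover, once one tests there, the symmetrization over $\mathcal S_n$ and the Vandermonde factors produce massive cancellation; the paper needs an $(n-2)$-step partial-fraction elimination of the numerator, a restriction to nested annular sectors $U_n(s,j)$ with radii separated by powers of $(5n!)^2$, and a term-by-term comparison showing the identity permutation dominates, in order to extract the surviving $-\log(1-s)$ divergence. None of this appears in your proposal, and the phrase ``once the exponent $\alpha$ is pinned down to the critical value, the $L^p$ norm diverges'' conceals exactly the part of the argument that does not follow from a routine Forelli--Rudin estimate.
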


Last, we reference below the Forelli-Rudin estimates on $\mathbb D$  which will be used in the proof of Theorem \ref{main1}. See for example \cite{Zhu} for its proof.
\begin{lem}[Forelli-Rudin]\label{lemma2.1} 
	For $\epsilon<1$ and $z\in\mathbb D$, let 
	\begin{equation}\label{**}
		a_{\epsilon,s}(z)=\int_{\mathbb D}\frac{(1-|w|^2)^{-\epsilon}}{|1-z\bar w |^{2-\epsilon-s}}dV(w),
	\end{equation}
	Then \begin{enumerate}
		\item for $s>0$, $a_{\epsilon,s}(z)$ is bounded on $\mathbb D$;
		\item for $s=0$,  $a_{\epsilon,s}(z)$ is comparable to the function $-\log(1-|z|^2)$;
		\item for $s<0$, $a_{\epsilon,s}(z)$ is comparable to the function $(1-|z|^2)^{s}$.
	\end{enumerate}
\end{lem}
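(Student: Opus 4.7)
The plan is to reduce the integral to a one-dimensional power-series computation, then read off the three asymptotic regimes from the growth of the Taylor coefficients. By the rotation invariance of $dV$ on $\mathbb{D}$, I may assume $z=r\in[0,1)$ after a substitution $w\mapsto we^{i\arg z}$. Set $\alpha=(2-\epsilon-s)/2$, so the kernel is $|1-r\bar w|^{-2\alpha}$. Writing $w=\rho e^{i\theta}$ and using the binomial series $(1-x)^{-\alpha}=\sum_k c_k(\alpha)x^k$ with $c_k(\alpha)=\Gamma(\alpha+k)/(\Gamma(\alpha)\,k!)$, I factor
\[
|1-r\rho e^{-i\theta}|^{-2\alpha}=(1-r\rho e^{-i\theta})^{-\alpha}(1-r\rho e^{i\theta})^{-\alpha}=\sum_{j,k}c_j(\alpha)c_k(\alpha)(r\rho)^{j+k}e^{i(k-j)\theta}.
\]
Integrating in $\theta$ kills every term with $j\neq k$, leaving
$\int_0^{2\pi}|1-r\rho e^{-i\theta}|^{-2\alpha}\,d\theta=2\pi\sum_k c_k(\alpha)^2(r\rho)^{2k}$.

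Next I carry out the radial integral. The remaining integral in $\rho$ is a Beta function: with $u=\rho^2$,
\[
\int_0^1(1-\rho^2)^{-\epsilon}\rho^{2k+1}\,d\rho=\tfrac12 B(k+1,1-\epsilon)=\frac{\Gamma(k+1)\,\Gamma(1-\epsilon)}{2\,\Gamma(k+2-\epsilon)},
\]
which is finite precisely because $\epsilon<1$. Interchanging sum and integral (legitimate since all terms are nonnegative), I obtain
\[
a_{\epsilon,s}(z)=\frac{\pi\,\Gamma(1-\epsilon)}{\Gamma(\alpha)^2}\sum_{k=0}^{\infty}\frac{\Gamma(\alpha+k)^2}{k!\,\Gamma(k+2-\epsilon)}\,|z|^{2k}.
\]

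Now I apply Stirling: $\Gamma(k+\beta)/\Gamma(k+1)\sim k^{\beta-1}$ as $k\to\infty$. Hence the coefficient of $|z|^{2k}$ behaves like $k^{2\alpha-2}\cdot k^{-(1-\epsilon)}=k^{2\alpha+\epsilon-3}$, and since $2\alpha=2-\epsilon-s$ this simplifies to $k^{-1-s}$. The three cases of the lemma follow from standard asymptotics of power series $\sum_k k^{-1-s}|z|^{2k}$ as $|z|\to 1^-$: if $s>0$ the series converges absolutely at $|z|=1$, giving a bounded function on $\mathbb{D}$; if $s=0$ the series is comparable to $-\log(1-|z|^2)$; and if $s<0$, the classical Tauberian/Abel-type estimate (or direct comparison with $(1-|z|^2)^s=\sum_k \binom{-s+k-1}{k}|z|^{2k}$ whose coefficients also satisfy $\Gamma(-s+k)/(\Gamma(-s)k!)\sim k^{-1-s}/\Gamma(-s)$) gives $a_{\epsilon,s}(z)\approx(1-|z|^2)^s$.

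The routine parts are the series manipulations and the Beta-function evaluation. The only delicate point is the $s<0$ case, where one must compare two power series with the same coefficient asymptotics; rather than invoking a general Tauberian theorem, I would finish it by writing $(1-|z|^2)^s = \sum_k b_k|z|^{2k}$ explicitly, checking that the ratio of the $k$-th coefficient of $a_{\epsilon,s}$ to $b_k$ tends to a nonzero finite limit as $k\to\infty$ (again via Stirling), and concluding comparability by splitting off a finite head of the series.
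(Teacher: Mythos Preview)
The paper does not actually prove this lemma; it simply cites \cite{Zhu}. Your argument is the standard proof one finds there: reduce by rotation invariance, expand the kernel via the binomial series, integrate out the angle to collapse to a single sum, evaluate the radial integral as a Beta function, and read off the asymptotics of the coefficients via Stirling. The three regimes then follow from elementary power-series comparison, exactly as you describe. So your approach matches the referenced source and is correct.

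One small caveat worth patching: your justification for interchanging sum and integral (``all terms are nonnegative'') relies on $c_k(\alpha)\ge 0$, which holds only when $\alpha=(2-\epsilon-s)/2>0$. For $s=0$ and $s<0$ this is automatic since $\epsilon<1$, but in case~(1) with $s\ge 2-\epsilon$ you have $\alpha\le 0$. That subcase is trivial anyway, since then $|1-z\bar w|^{-2\alpha}\le 2^{-2\alpha}$ is bounded and $(1-|w|^2)^{-\epsilon}$ is integrable, so $a_{\epsilon,s}$ is bounded directly; it would be cleanest to dispose of it in one line before launching the series computation.
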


\section{Proof of Theorem \ref{main10}} While \cite[Remark 1.5]{CJY23} sketches the proof of Theorem 2.3, here we provide all the relevant details to make the paper self-contained. See also \cite[Corollary 6.1]{YZ}. The main ingredient of the weighted norm estimates of the positive Bergman operator $P^+_{\mathbb D}$ over weighted $L^p$ spaces. 
On the unit disc $\mathbb D$, the boundedness of $P_{\mathbb D}$ and $P^+_{\mathbb D}$ on weighted $L^p$ spaces is closely related to the Bekoll\'e-Bonami constant of weight functions.
Let $T_z$ denote the Carleson tent over $z$ in the unit disc $\mathbb D$ defined as below:
 \begin{itemize}
	\item $T_z:=\left\{w\in \mathbb D:\left|1-\bar w\frac{z}{|z|}\right|<1-|z|\right\}$ for $z\neq 0$, and
	\item $T_z:= \mathbb D$ for $z=0$.
\end{itemize} Bekoll\'e and Bonami \cite{BB78} characterized weighted $L^p$ spaces where $P_{\mathbb D}$ and $P^+_{\mathbb D}$  are bounded:
\begin{thm}[Bekoll\'e-Bonami {\cite{BB78}}]
	Let the weight $u(w)$ be a positive, locally integrable function on the unit disc $\mathbb D$. Let $1<p<\infty$. Then the following conditions are equivalent:
	\begin{enumerate}
		\item $P:L^p(\mathbb D,u)\mapsto L^p(\mathbb D,u)$ is bounded.
		\item $P^+:L^p(\mathbb D,u)\mapsto L^p(\mathbb D,u)$ is bounded.
		\item The Bekoll\'e-Bonami constant $$B_p(u):=\sup_{z\in \mathbb D}\frac{\int_{T_z}u(w) dV(w)}{\int_{T_z}dV(w)}\left(\frac{\int_{T_z}u^{-\frac{1}{p-1}} (w)dV(w)}{\int_{T_z}dV(w)}\right)^{p-1}$$ is finite.
	\end{enumerate} 
\end{thm}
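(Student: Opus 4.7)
The strategy is to establish the cycle $(2)\Rightarrow(1)\Rightarrow(3)\Rightarrow(2)$. The first implication is immediate from the pointwise bound $|P_{\mathbb D}(f)(z)|\leq P^+_{\mathbb D}(|f|)(z)$ built into the definitions, so the real content lies in the necessity $(1)\Rightarrow(3)$ and, more substantially, in the sufficiency $(3)\Rightarrow(2)$.

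For $(1)\Rightarrow(3)$, the plan is to test the hypothesis on functions localized to a single Carleson tent. Fix $z\in\mathbb D\setminus\{0\}$ and take $f_z(w)=\chi_{T_z}(w)\,u(w)^{-1/(p-1)}\,\eta_z(w)$, where $\eta_z$ is a measurable unimodular factor chosen so that $K_{\mathbb D}(\zeta;\bar w)\eta_z(w)$ has nonnegative real part for $\zeta$ in a suitable subtent $T_{z'}\subset T_z$ and all $w\in T_z$; the role of $\eta_z$ is to defeat the oscillation of the Bergman kernel. Combined with the two-sided estimate $|K_{\mathbb D}(\zeta;\bar w)|\approx |T_z|^{-1}$ for $\zeta,w\in T_z$, this gives $|P_{\mathbb D}(f_z)(\zeta)|\gtrsim |T_z|^{-1}\int_{T_z}u^{-1/(p-1)}\,dV$ on $T_{z'}$. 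Inserting into the boundedness inequality, using $\|f_z\|^p_{L^p(u)}=\int_{T_z}u^{-1/(p-1)}\,dV$ and $|T_z|\approx|T_{z'}|$, and letting the pair $(z,z')$ range so that the subtents exhaust all Carleson tents in $\mathbb D$, one recovers $B_p(u)\leq C\|P_{\mathbb D}\|^p$. The analogous (easier) argument without the unimodular twist handles $(2)\Rightarrow(3)$ directly.

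For the main implication $(3)\Rightarrow(2)$, I would discretize $P^+_{\mathbb D}$ via a dyadic tent system. Introduce a family of dyadic boundary arcs $\{I\}\subset\partial\mathbb D$ with associated tents $T_I$ and top halves $\hat T_I$, chosen so that the $\hat T_I$ tile $\mathbb D$. Exploiting the kernel estimate $|K_{\mathbb D}(z;\bar w)|\approx |T_I|^{-1}$ for $z,w\in T_I$, one derives the pointwise domination
\begin{equation*}
P^+_{\mathbb D}(f)(z)\;\lesssim\;\sum_{I\,:\,z\in T_I}\frac{1}{|T_I|}\int_{T_I}|f|\,dV,
\end{equation*}
whose right-hand side is a positive dyadic tent operator $Tf(z)$. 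The problem reduces to showing that $T$ is bounded on $L^p(\mathbb D,u)$ under the $B_p$ hypothesis. By duality, this is equivalent to the weighted $L^p(u)$-boundedness of the tent maximal operator $M_Tg(z):=\sup_{I\,:\,z\in T_I}|T_I|^{-1}\int_{T_I}|g|\,dV$, which I would establish by a Calder\'on-Zygmund stopping-time decomposition on the tent tree: decompose level sets of $M_Tg$ into maximal tents and use $B_p(u)<\infty$ together with its dual counterpart involving $u^{-1/(p-1)}$ to sum the resulting pieces, mirroring Muckenhoupt's proof of the weighted bound for the Hardy-Littlewood maximal operator.

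The main obstacle is the weighted $L^p(u)$-boundedness of $M_T$ (equivalently of $T$) from $B_p(u)<\infty$ alone. Although the scheme is structurally parallel to the Euclidean $A_p$ theorem, only the top halves $\hat T_I$ form a genuine dyadic partition, while the tents $T_I$ at the same generation overlap radially; the stopping-time covering must therefore be performed on the tree of nested tents to produce a sparse family with bounded overlap, and the passage from the resulting weak-type estimate to the strong-type $L^p$ bound for $M_T$ requires careful use of a reverse-H\"older property of $B_p$ weights. A secondary but nontrivial point is arranging the pointwise kernel domination so that the reduction from $P^+_{\mathbb D}$ to $T$ loses no more than a constant factor in the $L^p$ norm.
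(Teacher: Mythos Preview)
The paper does not prove this theorem; it is quoted as a classical result of Bekoll\'e and Bonami \cite{BB78} and used as background for the weighted estimates in Section~3. Consequently there is no proof in the paper to compare your proposal against.

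That said, your outline is a coherent sketch of a modern proof. The implication $(2)\Rightarrow(1)$ is indeed trivial, and your testing argument for $(1)\Rightarrow(3)$ is the standard one; the unimodular twist $\eta_z$ can be made concrete on the disc since $K_{\mathbb D}(\zeta;\bar w)=\pi^{-1}(1-\zeta\bar w)^{-2}$ has a well-controlled argument when $\zeta,w$ lie in a common small tent. For $(3)\Rightarrow(2)$ your dyadic strategy is essentially the approach of Pott--Reguera and of \cite{Rahm} (which the paper cites as Theorem~3.2), rather than Bekoll\'e and Bonami's original argument, which proceeded through more direct kernel and Schur-type estimates without an explicit dyadic model. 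Your identification of the two delicate points---the sparse-domination step on the tent tree (where only the top halves $\hat T_I$ partition $\mathbb D$) and the passage from $B_p$ to strong-type bounds for the tent maximal operator---is accurate; both are handled in the cited literature, so for the purposes of this paper the theorem is simply invoked rather than re-proved.
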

Using dyadic harmonic analysis technique, various authors established quantitative weighted $L^p$ norm estimates of the Bergman projection. See \cite{Pott,Rahm,ZhenghuiWick2,HWW2}. 
\begin{thm}[{\hspace{1sp}\cite[Lemma 15]{Rahm}}]\label{thm2.4}
Let the weight function $u$ be positive, locally integrable on $\mathbb D$. Then for $p\in (1,\infty)$,
\[\|P_{\mathbb D}\|_{L^p(\mathbb D,u)}\leq \|P^+_{\mathbb D}\|_{L^p(\mathbb D,u)}\lesssim \left(B_p(u)\right)^{\max\{1,(p-1)^{-1}\}}.\] 
\end{thm}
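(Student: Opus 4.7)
The plan is the standard two-step strategy of modern weighted harmonic analysis: first pointwise dominate $P^+_{\mathbb D}$ by a finite sum of positive dyadic tent operators on the disc, then prove the sharp linear $L^2(u)$ bound for each dyadic piece and pass to general $p$ by Rubio de Francia sharp extrapolation. The pointwise inequality $|P_{\mathbb D} f|\le P^+_{\mathbb D}|f|$ takes care of the first inequality in the statement, so all work is on $P^+_{\mathbb D}$.

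For the dyadic domination, I would fix a finite family of shifted dyadic grids $\mathcal D_1,\dots,\mathcal D_N$ of Carleson tents adapted to $\mathbb D$ (the Bergman analogue of the classical $1/3$-trick) and establish
\begin{equation*}
P^+_{\mathbb D}f(z) \;\lesssim\; \sum_{k=1}^{N} T_{\mathcal D_k} f(z),
\qquad T_{\mathcal D}f := \sum_{T \in \mathcal D} \langle f \rangle_T \,\chi_T .
\end{equation*}
The kernel estimate behind this is that whenever $z,w$ lie in a common Whitney tent $T_I$ over a dyadic arc $I$, one has $|1-z\bar w|^{-2}\approx |T_I|^{-1}$; grouping contributions by the Whitney tent containing $w$ telescopes the Bergman kernel into averages over tents. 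It then suffices to bound each positive dyadic operator $T_{\mathcal D}$ on $L^p(\mathbb D,u)$.

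For each such $T_{\mathcal D}$, I would establish the linear bound $\|T_{\mathcal D}\|_{L^2(\mathbb D,u)}\lesssim B_2(u)$ by a Sawyer-type testing argument. Since $T_{\mathcal D}$ is positive and self-adjoint on $L^2$, testing reduces to the dyadic Carleson condition on characteristic functions of tents, and a standard stopping-time / Carleson embedding argument on the tree of tents compresses each test integral to the principal quantity $\langle u\rangle_T\langle \sigma\rangle_T\le B_2(u)$, with $\sigma=u^{-1}$. Applying the Dragi\v cevi\'c--Grafakos--Pereyra--Petermichl sharp extrapolation theorem to this $p=2$ bound produces, for every $p\in(1,\infty)$ and every $u$ with $B_p(u)<\infty$,
\begin{equation*}
\|T_{\mathcal D}\|_{L^p(\mathbb D,u)}\;\lesssim\; B_p(u)^{\max\{1,(p-1)^{-1}\}},
\end{equation*}
and summing over the $N$ shifted grids yields the claimed estimate for $P^+_{\mathbb D}$.

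The main obstacle is the pointwise dyadic domination on the curved disc: one must choose the shifted tent grids so that every pair $(z,w)$ with $|1-z\bar w|\approx 2^{-k}$ sits jointly inside a single tent of some $\mathcal D_j$, and verify that no Whitney layer is missed in the resulting summation. This Bergman analogue of the $1/3$-trick has been carried out in the references cited in the excerpt (Pott--Reguera, Rahm), and I would follow their construction, tracking constants carefully so that the $L^2$ step produces a strictly linear dependence on $B_2(u)$ and the extrapolation step then outputs exactly the exponent $\max\{1,(p-1)^{-1}\}$.
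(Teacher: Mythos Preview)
The paper does not supply its own proof of this theorem; it is quoted directly as \cite[Lemma 15]{Rahm} and used as a black box. Your sketch---pointwise domination of $P^+_{\mathbb D}$ by finitely many positive dyadic tent operators via shifted grids, a linear $B_2(u)$ bound for each dyadic piece by Sawyer testing and Carleson embedding, then sharp extrapolation to general $p$---is exactly the strategy of the cited references (Pott--Reguera, Rahm), so there is nothing to compare: you have correctly reconstructed the argument the paper is invoking.
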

\begin{lem}\label{lem2.5}
For a fixed point $a\in \mathbb D$, let  $u_p(w)=|a-w|^{2-p}$. Then
 for any $p\in (4/3,4)$,  $B_p(u_p)\lesssim 1$ where the upper bound is independent of $a$.
Moreover, if we choose arbitrary $m$ points $a_1,\dots,a_m$ in  $\mathbb D$, and set
$$v_p(w)=\prod_{j=1}^{m}|a_j-w|^{2-p},$$ then   for any $p\in (\frac{2m+2}{m+2},\frac{2m+2}{m})$,  $B_p(v_p)\lesssim 1$.  Here the upper bounds may depend on constants $m$ and $p$ but are independent of $a_j$.
\end{lem}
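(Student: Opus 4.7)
The plan is to estimate both averages in the Bekoll\'e--Bonami functional via a single one-point integral bound and then exploit the algebraic identity $s_1 + s_2(p-1) = 0$ where $s_1 := 2 - p$ and $s_2 := (p-2)/(p-1) = -s_1/(p-1)$. The key preliminary step is the following one-point estimate: for every $s > -2$, every $z \in \mathbb D$ with $R := 1 - |z|$, and a suitable ``center'' $c_z$ of $T_z$ (say $c_z = (1 - R/2) z/|z|$ for $z \neq 0$ and $c_0 = 0$), we have
\begin{equation*}
\frac{1}{|T_z|}\int_{T_z} |a-w|^s \, dV(w) \lesssim (|a-c_z|+R)^s,
\end{equation*}
with constants depending only on $s$. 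I would prove this by a dichotomy on $|a-c_z|/R$. When $|a-c_z| \geq 2R$, the diameter bound $\mathrm{diam}(T_z) \lesssim R$ forces $|a-w| \approx |a-c_z|$ uniformly for $w \in T_z$, and the factor pulls out of the integral. When $|a-c_z| < 2R$, we have $T_z \subset B(a,3R)$, so the integral is controlled by $\int_{B(a,3R)} |a-w|^s \, dV \approx R^{s+2}$ (using polar coordinates centered at $a$ and $s > -2$); since $|a-c_z|+R \approx R$ in this case, the bound $|T_z| (|a-c_z|+R)^s$ is again recovered.

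For the one-point weight $u_p$, I apply the preliminary estimate once with exponent $s_1$ and once with exponent $s_2$. The finiteness conditions $s_1 > -2$ and $s_2 > -2$ are equivalent to $p < 4$ and $p > 4/3$, which is precisely the stated range. Inserting both bounds into the Bekoll\'e--Bonami functional yields
\begin{equation*}
B_p(u_p) \lesssim \sup_{z \in \mathbb D} (|a-c_z|+R)^{s_1}\cdot (|a-c_z|+R)^{s_2(p-1)} = \sup_{z \in \mathbb D} (|a-c_z|+R)^{(2-p)+(p-2)} = 1,
\end{equation*}
with a constant independent of $a$.

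For the $m$-point weight $v_p$, I would decouple the integrals via H\"older's inequality with $m$ factors of conjugate exponent $m$:
\begin{equation*}
\int_{T_z}\prod_{j=1}^m |a_j-w|^s \, dV \;\leq\; \prod_{j=1}^m\Bigl(\int_{T_z} |a_j-w|^{ms}\, dV\Bigr)^{1/m},
\end{equation*}
and then apply the one-point estimate with exponent $ms$ to each factor. This step is legal exactly when $ms > -2$, i.e.\ $s > -2/m$; solving $s_1 > -2/m$ and $s_2 > -2/m$ reproduces the advertised interval $p \in (\frac{2m+2}{m+2},\frac{2m+2}{m})$. The same cancellation $s_1 + s_2(p-1) = 0$ then collapses the product of $(|a_j-c_z|+R)$ factors to $1$, giving $B_p(v_p) \lesssim 1$ with constants depending only on $m$ and $p$. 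I expect the main technical obstacle to be the one-point estimate in the regime $-2 < s < 0$ with $a$ interior to (or near) $T_z$, where $|a-w|^s$ has a genuine integrable singularity; the remedy is to enlarge $T_z$ to a Euclidean ball of radius comparable to $R$ centered at $a$ and exploit the isotropy of $|a-w|^s$ via polar coordinates.
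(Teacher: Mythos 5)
Your proposal is correct and follows essentially the same route as the paper: a near/far dichotomy on the position of $a$ relative to $T_z$ (polar coordinates when $a$ is within $O(1-|z|)$ of the tent, near-constancy of $|a-w|$ otherwise), followed by a H\"older decoupling that reduces the $m$-point weight to the single-point weight $|a-w|^{m(2-p)}$, with the exponent identity $(2-p)+\frac{p-2}{p-1}(p-1)=0$ doing the final cancellation in both arguments. The only cosmetic differences are that you apply generalized H\"older directly to the product inside each tent integral rather than the paper's arithmetic--geometric mean inequality followed by H\"older on the resulting sums, and your unified bound $(|a-c_z|+(1-|z|))^{s}$ packages the two cases a bit more cleanly (in particular it handles the regime where $a$ is far from $T_z$, where the paper's stated bound $|a-w|\le 5(1-|z|)$ is not literally true but is also not needed).
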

\begin{proof}
We first consider the case of the weight $u_p$. Note that $u_p$ and $u_p^{-1/(p-1)}$ are integrable on $\mathbb D$ if and only if $p\in (\frac{4}{3},4)$. Then it enough to show that $B_p(|a-w|^{b})\lesssim 1$ with an upper bound independent of $a$ if both $u_p$ and $u_p^{-1/(p-1)}$ are integrable on $\mathbb D$. We consider the integral of $u_p$ and $u_p^{-1/(p-1)}$ over $T_z$ for arbitrary $z\in \mathbb D$. 
Notice that  $T_z=\mathbb D\cap\{w:|w-\frac{z}{|z|}|<1-|z|\}$ is the intersection set of the unit disc $\mathbb D$ and the disc
centered at the point $z/|z|$ with Euclidean radius $1 -|z|$. A geometric consideration then
yields that the Lebesgue measure $V(T_z)$ of $T_z$ is comparable to $(1-|z|)^2$. 

If $|a-z|<3(1-|z|)$, then $T_z$ is contained in a ball $B_a$ given by
\[B_a=\{w\in \mathbb C: |w-a|<5(1-|z|)\}.\] Thus,
\begin{align*}
&\frac{\int_{T_z}u_p(w) dV(w)}{\int_{T_z}dV(w)}\left(\frac{\int_{T_z}u_p^{-\frac{1}{p-1}} (w)dV(w)}{\int_{T_z}dV(w)}\right)^{p-1}\\\lesssim& \frac{\int_{B_a}|w-a|^{2-p} dV(w)\left(\int_{B_a}|w-a|^{(p-2)/(p-1)} dV(w)\right)^{p-1}}{(1-|z|)^{2p}}\\=&\frac{(5(1-|z|))^{4-p}\cdot \left((p-1)(3p-4)^{-1}(5(1-|z|))^{(3p-4)/(p-1)}\right)^{p-1}}{(4-p)5^{2p}(1-|z|)^{2p}}\\=&\frac{(p-1)^{p-1}}{(4-p)(3p-4)^{p-1}},
	\end{align*}
provided $u_p$ and $u_p^{-1/(p-1)}$ are integrable.
If $|a-z|\geq 3(1-|z|)$, then $ |a-w|\approx |a-z|$ for all $w\in T_z$ and hence
\begin{align*}
	&\frac{\int_{T_z}u_p(w) dV(w)}{\int_{T_z}dV(w)}\left(\frac{\int_{T_z}u_p^{-\frac{1}{p-1}} (w)dV(w)}{\int_{T_z}dV(w)}\right)^{p-1}\\\lesssim& 	\frac{|a-z|^{2-p}\int_{T_z} dV(w)}{\int_{T_z}dV(w)}\left(\frac{|a-z|^{(p-2)/(p-1)}\int_{T_z} dV(w)}{\int_{T_z}dV(w)}\right)^{p-1}\\=&1.
\end{align*}
Since the upper bound obtained in both cases are independent of the choice of $a$ and $T_z$, we conclude that $B_p(u_p)$ is bounded above by a constant if and only if $p\in (4/3,4)$ and the upper bound is independent of $a_j$.

Now we turn to the case of weight $v_p(w)=\prod_{j=1}^{m}|a_j-w|^{2-p}$. By a similar proof as above, $B_p(|a-w|^{(2-p)m})\lesssim1$ for any $p\in(\frac{2m+2}{m+2},\frac{2m+2}{m})$ where the upper bound is independent of $a$. Using H\"older's inequality, we obtain for any $z\in \mathbb D$
\begin{align*}
	&\frac{\int_{T_z}v_p(w) dV(w)}{\int_{T_z}dV(w)}\left(\frac{\int_{T_z}v_p^{-\frac{1}{p-1}} (w)dV(w)}{\int_{T_z}dV(w)}\right)^{p-1}\\\lesssim& 	\left(\prod_{j=1}^{m}\left(\frac{\int_{T_z} |a_j-w|^{m(2-p)}dV(w)}{\int_{T_z}dV(w)}\right)^{\frac{1}{mp}}\left(\frac{\int_{T_z} |a_j-w|^{m(p-2)/(p-1)}dV(w)}{\int_{T_z}dV(w)}\right)^{\frac{1}{m}-\frac{1}{mp}}\right)^p\\\lesssim&\left(\prod_{j=1}^{m}B_p\left(|a_j-w|^{m(2-p)}\right)\right)^{\frac{p}{m}}\lesssim 1.
\end{align*}
Therefore, $B_p(v_p)\lesssim 1$ with upper bound independent of points $a_j$.
\end{proof}
With Lemma \ref{lem2.5}, we are ready to show Theorem \ref{main10}:
\begin{proof}[Proof of Theorem \ref{main10}]
Since $|P_{\mathbb D^n}(h)(z)|\leq P^+_{\mathbb D^n}(|h|)(z)$ for any $h\in L^p(\mathbb D^n,|J_{\mathbb C}\Phi_n|^{2-p})$, it suffices to show the boundedness for $P^+_{\mathbb D^n}$.
Note that $J_{\mathbb C}\Phi_n(w)$ consists of $n-1$ many factors of each variable $w_j$. When integrating with respect to the single variable $w_j$, only these  $n-1$  factors matter in $J_{\mathbb C}\Phi_n(w)$. Thus the boundedness of $P^+_{\mathbb D^n}$ on $L^p(\mathbb D^n,|J_{\mathbb C}\Phi_n|^{2-p})$ for $p\in\left(\frac{2n}{n+1}, \frac{2n}{n-1}\right)$ follows from  Fubini and Lemma \ref{lem2.5} with $m=n-1$.\end{proof}

\section{Proof of Theorem \ref{main1}}
We will first prove the theorem for the case $n=2$, clearly illustrating the decomposition we use for $P_{\mathbb D^2}$. Then we dive into the case for general $n$ where the decomposition procedure and estimations are more complicated yet the same  strategy applies. 
\subsection{The case for $n=2$} Note that $J_{\mathbb C}\Phi_2=w_1-w_2$. To prove Theorem \ref{main1}, we consider the decomposition $P_{\mathbb D^2}=T^2_1+T^2_2$ where
\begin{align}
	T^2_1(f)(z_1,z_2)&=\int_{\mathbb D^2}\frac{f(w_1,w_2)dV}{\pi^2(1-z_1\bar w_1)(1-z_2\bar w_2)(1-z_1\bar w_2)(1-z_2\bar w_1)}, \\
	T^2_2(f)(z_1,z_2)&=\int_{\mathbb D^2}\frac{(z_1-z_2)(\bar w_1-\bar w_2)f(w_1,w_2)dV}{\pi^2(1-z_1\bar w_1)^2(1-z_2\bar w_2)^2(1-z_1\bar w_2)(1-z_2\bar w_1)}.
\end{align}
\begin{lem}\label{lem2.3}
	$T^2_1$ is a zero operator on $L^p_{\text{anti}}(\mathbb D^2,|w_1-w_2|^{2-p})$.
\end{lem}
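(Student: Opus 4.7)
The plan is to exploit a symmetry of the kernel of $T^2_1$ in the integration variable: since the kernel is symmetric in $(w_1,w_2)$, pairing it against an anti-symmetric $f$ produces an anti-symmetric integrand, which vanishes upon integration over the symmetric domain $\mathbb D^2$.

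First, I would verify by inspection that the denominator $(1-z_1\bar w_1)(1-z_2\bar w_2)(1-z_1\bar w_2)(1-z_2\bar w_1)$ is invariant under the transposition $\tau_{1,2}\colon (w_1,w_2)\mapsto (w_2,w_1)$: this swap simply interchanges the first factor with the third and the second with the fourth. Thus the kernel $K_1(z;\bar w)$ of $T^2_1$ is $(1,2)$-symmetric in its second slot.

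Next, fixing $z\in\mathbb D^2$ and performing the measure-preserving change of variables $(w_1,w_2)\mapsto (w_2,w_1)$ in the defining integral, the kernel is unchanged while $f(w_1,w_2)\mapsto f(w_2,w_1)=-f(w_1,w_2)$ by $(1,2)$-anti-symmetry. This yields $T^2_1(f)(z)=-T^2_1(f)(z)$, so $T^2_1(f)\equiv 0$.

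The only minor point that needs addressing is absolute convergence so that the change of variables is legitimate. For $z$ in the open polydisc $\mathbb D^2$, $|K_1(z;\bar w)|$ is bounded uniformly in $w$, and a direct H\"older argument shows $L^p_{\text{anti}}(\mathbb D^2,|w_1-w_2|^{2-p})\subset L^1(\mathbb D^2)$ in the relevant range (in particular at $p=4$), since $|w_1-w_2|^{-(2-p)/(p-1)}$ is integrable on $\mathbb D^2$ there. Beyond this routine check there is no real obstacle; the entire content of the lemma is the kernel-symmetry observation, and the decomposition $P_{\mathbb D^2}=T^2_1+T^2_2$ was presumably engineered precisely so that the ``symmetric part'' $T^2_1$ annihilates anti-symmetric functions, shifting all of the action onto $T^2_2$.
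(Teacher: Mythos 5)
Your argument is correct and is essentially the paper's own: the paper likewise kills $T^2_1$ on anti-symmetric inputs via the invariance of its kernel under the swap $w_1\leftrightarrow w_2$ (phrased for $n=2$ through the symmetry of $T^2_1(f)$ in $z$, and for general $n$ in Lemma \ref{lem4.3} exactly as you do, by a measure-preserving change of variables in $w$). Your added integrability check is a harmless refinement the paper omits.
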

\begin{proof}
	Note that $T^2_1(f)(z_1,z_2)$ is symmetric by its definition. For any $f\in L^p_{\text{anti}}(\mathbb D^2,|w_1-w_2|^{2-p})$,  $$T^2_1(f)(z_1,z_2)=T^2_1(-f)(z_2,z_1)=-T^2_1(f)(z_1,z_2),$$which implies $T^2_1(f)= 0$. 
\end{proof}
By  Lemma \ref{lem2.3}, $P_{\mathbb D^2}=T^2_2$  on $L^p_{\text{anti}}(\mathbb D^2,|w_1-w_2|^{2-p})$. So, Theorem \ref{main1} can be further reduced into the following statement in the case $n=2$.
\begin{thm}\label{main2}
	$T^2_2$ is unbounded on  $L^p_{\text{anti}}(\mathbb D^2,|w_1-w_2|^{2-p})$ for $p=4=\frac{2\times 2}{2-1}$.
\end{thm}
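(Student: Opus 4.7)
The plan is: first reduce to a cleaner operator on symmetric functions via an isometric identification, then exhibit an explicit test family whose output under this operator has a manageable closed form, and finally estimate both weighted $L^4$ norms directly in polar coordinates to detect a logarithmic blow-up.

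For the reduction, since any $f\in L^p_{\text{anti}}(\mathbb D^2, |w_1-w_2|^{2-p})$ vanishes on the diagonal $\{w_1=w_2\}$, I write $f(w)=(w_1-w_2)g(w)$ with $g$ symmetric; the identity $|f|^p|w_1-w_2|^{2-p}=|g|^p|w_1-w_2|^2$ then gives an isometric identification with $L^p_{\text{sym}}(\mathbb D^2, |w_1-w_2|^2)$. Substituting into $T^2_2$, using $(\bar w_1-\bar w_2)(w_1-w_2)=|w_1-w_2|^2$, and factoring out the $(z_1-z_2)$ coming from the anti-symmetry of $T^2_2(f)=P_{\mathbb D^2}(f)$ (by Lemma~\ref{lem2.3}), I obtain $T^2_2(f)(z)=(z_1-z_2)\tilde T^2(g)(z)$ where
\[\tilde T^2(g)(z)=\int_{\mathbb D^2}\frac{|w_1-w_2|^2\, g(w)\,dV}{\pi^2(1-z_1\bar w_1)^2(1-z_2\bar w_2)^2(1-z_1\bar w_2)(1-z_2\bar w_1)}.\]
Theorem~\ref{main2} becomes the unboundedness of $\tilde T^2$ on $L^4_{\text{sym}}(\mathbb D^2, |w_1-w_2|^2)$.

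For the test family I would take the symmetric functions $g_k(w)=\bar w_1 w_2^k+\bar w_2 w_1^k$, $k\geq 1$. Expanding $f_k=(w_1-w_2)g_k$ into four separable monomials and applying $P_{\mathbb D^2}$ via its tensor structure (using $P_{\mathbb D}(\bar w^\ell)=0$ for $\ell\geq 1$ and $P_{\mathbb D}(|w|^2)=1/2$), only the two diagonal terms $|w_j|^2 w_{3-j}^k$ project nontrivially, yielding $P_{\mathbb D^2}(f_k)(z)=\tfrac12(z_2^k-z_1^k)$ and hence $\tilde T^2(g_k)(z)=-\tfrac12\sum_{j=0}^{k-1}z_1^j z_2^{k-1-j}$. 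For the norms, passing to polar coordinates and integrating out the angles first: only $\gamma$-constant Fourier modes of $|g_k|^4|w_1-w_2|^2$ survive, giving $\|g_k\|^4_{L^4(|w_1-w_2|^2)}\approx 1/k$; on the other hand $\|\tilde T^2(g_k)\|^4_{L^4(|z_1-z_2|^2)}=(1/16)\int|z_1^k-z_2^k|^4/|z_1-z_2|^2\,dV$, and the Poisson-type formula $\int_0^{2\pi}\cos(n\gamma)/(r_1^2+r_2^2-2r_1r_2\cos\gamma)\,d\gamma=2\pi(r_2/r_1)^n/(r_1^2-r_2^2)$ (for $r_1>r_2$) causes the four-term numerator to collapse to $2\pi(r_1^{4k}-r_2^{4k})/(r_1^2-r_2^2)$, after which the radial double integral telescopes to $H_{2k}/(4k+2)\approx(\log k)/k$ via $(a^{2k}-b^{2k})/(a-b)=\sum_j a^{2k-1-j}b^j$. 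Thus $\|\tilde T^2(g_k)\|/\|g_k\|\approx(\log k)^{1/4}\to\infty$.

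The main obstacle is identifying the test family: natural candidates like separable Forelli-Rudin peaks $(1-|w_j|^2)^{-\epsilon}$, coupled rational factors such as $|1-w_1\bar w_2|^{-2s}$, or holomorphic reproducing kernels all return either constants or boundedly-controlled outputs, because anti-holomorphic factors in a single variable are killed by the tensor structure of $P_{\mathbb D^2}$. The family $g_k=\bar w_1 w_2^k+\bar w_2 w_1^k$ is the simplest genuinely non-separable symmetric test whose projection survives and produces the anti-symmetric polynomial $z_1^k-z_2^k$, whose weighted $L^4$ norm carries exactly the harmonic-sum growth $H_{2k}\sim\log k$ that delivers the endpoint divergence at $p=4$.
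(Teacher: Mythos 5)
Your proof is correct, and it reaches the unboundedness by a genuinely different route than the paper does. Both arguments start from the same isometric reduction to an operator $\tilde T^2$ acting on $L^4_{\text{sym}}(\mathbb D^2,|w_1-w_2|^{2})$ --- you factor $J_{\mathbb C}\Phi_2$ out of the input, the paper factors out $\overline{J_{\mathbb C}\Phi_2}$, so your kernel carries $|w_1-w_2|^2$ where the paper's carries $(\bar w_1-\bar w_2)^2$; either is fine --- and both invoke Lemma \ref{lem2.3} to identify $T^2_2$ with $P_{\mathbb D^2}$ on anti-symmetric inputs. The test families are where you diverge. The paper uses the boundary-concentrating holomorphic kernels $h_s(w)=\pi^{-1}(1-sw_1)^{-2}+\pi^{-1}(1-sw_2)^{-2}$, computes $\tilde T^2(h_s)$ by the reproducing property, and extracts the factor $-\log(1-s)$ from Forelli--Rudin asymptotics restricted to an angular sector $U(s)$ (the sector is needed to beat the cancellation between the two summands). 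You instead use the non-holomorphic symmetric polynomials $g_k=\bar w_1w_2^k+\bar w_2w_1^k$, for which $P_{\mathbb D^2}\left((w_1-w_2)g_k\right)=\tfrac12(z_2^k-z_1^k)$ follows from exact monomial orthogonality, and the endpoint failure appears as the harmonic sum in the exact identity $\int_{\mathbb D^2}|z_1^k-z_2^k|^4|z_1-z_2|^{-2}\,dV=(2\pi)^2H_{2k}/(4k+2)\approx (\log k)/k$; I verified the Poisson-kernel collapse of the angular integral and the telescoping radial integral, and they are correct, as is the bound $\|g_k\|^4_{L^4(|w_1-w_2|^2)}\lesssim 1/k$ (only the upper bound is needed, and it follows by integrating each of the boundedly many monomials $r_1^ar_2^b$ with $a+b=4k+6$ and $\min(a,b)\geq 4$ appearing in $|g_k|^4|w_1-w_2|^2$). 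Your version is more elementary --- no Forelli--Rudin estimates, no sector localization, everything exact --- and it isolates transparently why $p=4$ is critical: the weighted $L^4$ norm of $z_1^k-z_2^k$ picks up exactly one logarithm. What the paper's choice buys is scalability: the family $h_s$ and the sector argument extend to all $n$ in Theorem \ref{thm4.4}, whereas a degree-$k$ monomial family for general $n$ would require a new and less obvious construction.
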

\begin{proof}	Let $\tilde T^2$  denote the operator given as follows:
	\[\tilde T^2(h)(z):=(J_{\mathbb C}\Phi_2(z))^{-1}T_2(h\bar J_{\mathbb C}\Phi_2)(z).\]
	
	Then \begin{align}
		\tilde T^2(h)(z)&=\int_{\mathbb D^2}\frac{(\bar w_1-\bar w_2)^2h(w)dV}{\pi^2(1-z_1\bar w_1)^2(1-z_2\bar w_2)^2(1-z_1\bar w_2)(1-z_2\bar w_1)},
	\end{align} and $\|T^2_2\|_{L^p_{\text{anti}}(\mathbb D^2,|J_{\mathbb C}\Phi_2|^{2-p})}=\|\tilde T^2\|_{L^p_{\text{sym}}(\mathbb D^2,|J_{\mathbb C}\Phi_2|^{2})}$ provided one of the norms is finite. Thus it suffices to show that $\tilde T^2$ is unbounded on $L^p_{\text{sym}}(\mathbb D^2,|J_{\mathbb C}\Phi_2|^{2})$ for $p=4$.
For $s\in[\frac{1}{2},1)$,
we set $$h_s(w)=\frac{1}{\pi(1-sw_1)^2}+\frac{1}{\pi(1-sw_2)^2}.$$
	Then
	\begin{align*}
		\|h_s\|^4_{L^4_{\text{sym}}(\mathbb D^2,|J_{\mathbb C}\Phi_2|^{2})}&=\int_{\mathbb D^2}\left|\frac{1}{\pi(1-sw_1)^2}+\frac{1}{\pi(1-sw_2)^2}\right|^4| w_1- w_2|^2dV(w)\\&\lesssim\int_{\mathbb D}\frac{1}{\pi^{4}|1-sw_1|^{8}}\int_{\mathbb D}| w_1- w_2|^{2}dV(w_2)dV(w_1)
		\\&\approx (1-s)^{-6},
	\end{align*}
	where the last equality follows from the Forelli-Rudin estimates (\ref{**}).
	Note that the kernel function of $\tilde{T}^2$ is anti-holomorphic in $w$ variables and $h_s$ can be expressed in terms the conjugate of the Bergman kernels:
	\begin{align*}
	\sum_{j=1}^2\frac{1}{\pi(1-sw_j)^2}={\pi}\left(\overline{K_{\mathbb D^2}((s,0);(\bar w_1,0))}+\overline{K_{\mathbb D^2}((0,s);(0,\bar w_2))}\right).
	\end{align*}
The reproducing property of the Bergman projection implies:
	\begin{align*}
		\tilde T^2(h_s)(z)&=\int_{\mathbb D^2}\frac{(\bar w_1-\bar w_2)^2}{\pi^2(1-z_1\bar w_1)^2(1-z_2\bar w_2)^2(1-z_1\bar w_2)(1-z_2\bar w_1)}\sum_{j=1}^2\frac{1}{\pi(1-sw_j)^2}dV(w)\\&=\frac{s^2}{\pi(1-z_1s)^2(1-z_2s)}+\frac{s^2}{\pi(1-z_2s)^2(1-z_1s)}.
	\end{align*}
	Thus
	\begin{align}\label{4.181}
		\|\tilde T^2(h_s)\|^4_{L^4_{\text{sym}}(\mathbb D^2,|J_{\mathbb C}\Phi_2|^{2})}&=\int_{\mathbb D^2}\left|\frac{s^2}{\pi(1-z_1s)^2(1-z_2s)}+\frac{s^2}{\pi(1-z_2s)^2(1-z_1s)}\right|^4|z_1-z_2|^2dV(z)\nonumber\\&=\int_{\mathbb D^2}\left|\frac{1}{1-z_1s}+\frac{1}{1-z_2s}\right|^4\frac{s^8|z_1-z_2|^2}{\pi^4|1-z_1s|^4|1-z_2s|^4}dV(z).
	\end{align}
	For fixed $s<1$, set $U(s)=\{z\in\mathbb D: \text{Arg}(1-zs)\in \left(-\frac{\pi}{6},\frac{\pi}{6}\right)\}$. Then
	for $z_1,z_2\in U(s)$, 
	\[\left|\frac{1}{1-z_1s}+\frac{1}{1-z_2s}\right|\geq \frac{1}{2|1-z_1s|}.\]
	Applying this inequality to (\ref{4.181}) gives 
	\begin{align*}
		&\int_{\mathbb D^2}\left|\frac{1}{1-z_1s}+\frac{1}{1-z_2s}\right|^4\frac{s^8|z_1-z_2|^2}{\pi^4|1-z_1s|^4|1-z_2s|^4}dV(z)\gtrsim\int_{U^2(s)}\frac{|z_1-z_2|^2}{|1-z_1s|^8|1-z_2s|^4}dV(z).\end{align*}Since \[\frac{z_1-z_2}{(1-z_1s)(1-z_2s)}=\frac{1}{s(1-z_1s)}-\frac{1}{s(1-z_2s)},\]we have
	\begin{align*}&\int_{U^2(s)}\frac{|z_1-z_2|^2}{|1-z_1s|^8|1-z_2s|^4}dV(z)\\=&\int_{U^2(s)}\frac{1}{|1-z_1s|^6|1-z_2s|^2}\left|\frac{1}{s(1-z_1s)}-\frac{1}{s(1-z_2s)}\right|^2dV(z)
		\\=&\int_{U^2(s)}\frac{1}{s^2|1-z_1s|^6|1-z_2s|^2}\left(\frac{1}{|1-z_1s|^2}+\frac{1}{|1-z_2s|^2}-2\text{Re} \frac{1}{(1-z_1s)(1-\bar z_2 s)}\right)dV(z)\\\geq&\int_{U^2(s)}\frac{1}{|1-z_1s|^8|1-z_2s|^2}+\frac{1}{|1-z_1s|^6|1-z_2s|^4}-2 \frac{1}{|1-z_1s|^7|1-z_2s|^3}dV(z).
	\end{align*}
	By realizing that $|1-zs|=s|\frac{1}{s}-z|$ and applying polar coordinates, one can obtain the following Forelli-Rudin estimates (\ref{**}) on $U(s)$. 
	\[\int_{U(s)}\frac{1}{|1-zs|^a}dV(z)\approx\begin{cases}
		(1-s)^{2-a}& a>2\\-\log(1-s)& a=2\\1&a<2.
	\end{cases}\]
	We leave the details of its proof to readers as an exercise.
	Using these estimates,
	\begin{align*}
		&\int_{U^2(s)}\frac{1}{|1-z_1s|^8|1-z_2s|^2}dV(z)\approx -(1-s)^{-6}\log(1-s)\\&\int_{U^2(s)}\frac{1}{|1-z_1s|^6|1-z_2s|^4}dV(z)\approx \int_{U^2(s)} \frac{1}{|1-z_1s|^7|1-z_2s|^3}dV(z)\approx (1-s)^{-6},
	\end{align*}
	which implies that	$\|\tilde T^2(h_s)\|^4_{L^4_{\text{sym}}(\mathbb D^2,|J_{\mathbb C}\Phi_2|^{2})}\approx -(1-s)^{-6}\log(1-s)$. 
	
	As $s\to 1$,
	\[\frac{\|\tilde T^2(h_s)\|^4_{L^4_{\text{sym}}(\mathbb D^2,|J_{\mathbb C}\Phi_2|^{2})}}{\| h_s\|^4_{L^4_{\text{sym}}(\mathbb D^2,|J_{\mathbb C}\Phi_2|^{2})}}\gtrsim-\log(1-s)\to \infty,\]
	proving that  $\tilde T^2$ is unbounded on $L^4_{\text{sym}}(\mathbb D^2,|J_{\mathbb C}\Phi_2|^{2})$. \end{proof}
\subsection{The case for general $n$} Like the case $n=2$, our proof for general $n$ also involves a  decomposition of $P_{\mathbb D^n}$ into operators $T^n_1$ and $T^n_2$. 
\begin{align}\label{1.3}T^n_{1}(h)(z)&=\int_{\mathbb D^n}\frac{\prod_{1\leq j<k\leq n}(1-z_k\bar w_j)(1-z_j\bar w_k)-\prod_{1\leq j<k\leq n}(z_j-z_k)(\bar w_j-\bar w_k)}{\pi^n\prod_{1\leq j\leq k\leq n}(1-z_k\bar w_j)(1-z_k\bar w_j)}h(w)dV(w)\\\label{1.4}
T^n_{2}(h)(z)&=(P_{\mathbb D^n}-T^n_{1})(h)(z)=\int_{\mathbb D^n}\frac{\prod_{1\leq j<k\leq n}(z_j-z_k)(\bar w_j-\bar w_k)}{\pi^n\prod_{1\leq j\leq k\leq n}(1-z_k\bar w_j)(1-z_j\bar w_k)}h(w)dV(w).
\end{align}
\begin{lem}\label{lem4.3}
	$T^n_1$ 
	is a zero operator on $L^p_{\text{anti}}(\mathbb D^n,|J_{\mathbb C}\Phi_n|^{2-p})$.
\end{lem}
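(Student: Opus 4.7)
The plan is to show $T^n_2=P_{\mathbb D^n}$ on antisymmetric functions, so that $T^n_1=P_{\mathbb D^n}-T^n_2$ vanishes on $L^p_{\text{anti}}(\mathbb D^n,|J_{\mathbb C}\Phi_n|^{2-p})$. The main algebraic tools are Cauchy's determinantal identity and Borchardt's classical identity for Cauchy matrices.

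First I would apply Cauchy's determinant formula
\[\sum_{\sigma\in\mathcal S_n}\mathrm{sgn}(\sigma)\prod_{j=1}^n\frac{1}{1-z_j\bar w_{\sigma(j)}}=\frac{J_{\mathbb C}\Phi_n(z)\,\overline{J_{\mathbb C}\Phi_n(w)}}{\prod_{j,k=1}^n(1-z_j\bar w_k)}\]
to rewrite the kernel of $T^n_2$ as
\[K_{T^n_2}(z,\bar w)=\sum_{\sigma\in\mathcal S_n}\frac{\mathrm{sgn}(\sigma)}{\pi^n\prod_{j=1}^n(1-z_j\bar w_j)(1-z_j\bar w_{\sigma(j)})}.\]
The $\sigma=\mathrm{id}$ summand is precisely the Bergman kernel of $\mathbb D^n$, so $T^n_1$ is the negative of the signed sum over $\sigma\neq\mathrm{id}$ of the simpler integral operators with kernels $[\pi^n\prod_j(1-z_j\bar w_j)(1-z_j\bar w_{\sigma(j)})]^{-1}$.

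Next, both $K_{\mathbb D^n}$ and $K_{T^n_2}$ are invariant under the diagonal $\mathcal S_n$-action $(z,w)\mapsto(\rho\cdot z,\rho\cdot w)$ where $\rho\cdot w=(w_{\rho(1)},\dots,w_{\rho(n)})$, so $P_{\mathbb D^n}$, $T^n_2$, and hence $T^n_1$, all preserve antisymmetry. Thus for $f\in L^p_{\text{anti}}$ we may replace each kernel by its $w$-antisymmetrization. Using $\overline{J_{\mathbb C}\Phi_n}(\rho\cdot w)=\mathrm{sgn}(\rho)\overline{J_{\mathbb C}\Phi_n(w)}$ and the factorization
\[\prod_{j=1}^n(1-z_j\bar w_{\rho(j)})^2\cdot\prod_{j<k}(1-z_j\bar w_{\rho(k)})(1-z_k\bar w_{\rho(j)})=\prod_{j=1}^n(1-z_j\bar w_{\rho(j)})\cdot\prod_{j,k=1}^n(1-z_j\bar w_k),\]
a direct computation gives
\[\widetilde K_{T^n_2}(z,\bar w):=\frac{1}{n!}\sum_{\rho\in\mathcal S_n}\mathrm{sgn}(\rho)\,K_{T^n_2}(z,\rho\cdot\bar w)=\frac{J_{\mathbb C}\Phi_n(z)\,\overline{J_{\mathbb C}\Phi_n(w)}}{n!\,\pi^n\prod_{j,k}(1-z_j\bar w_k)}\,\mathrm{perm}\!\left[\frac{1}{1-z_j\bar w_k}\right]_{j,k=1}^n.\]

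On the other hand, the $w$-antisymmetrization of $K_{\mathbb D^n}$ is the antisymmetric Bergman reproducing kernel
\[\widetilde K_{P_{\mathbb D^n}}(z,\bar w)=\frac{1}{n!\,\pi^n}\det\!\left[\frac{1}{(1-z_j\bar w_k)^2}\right]_{j,k=1}^n,\]
and Borchardt's identity $\det[a_{ij}^2]=\det[a_{ij}]\cdot\mathrm{perm}[a_{ij}]$ applied with $a_{ij}=(1-z_i\bar w_j)^{-1}$, together with Cauchy's formula for $\det[a_{ij}]$, shows $\widetilde K_{P_{\mathbb D^n}}=\widetilde K_{T^n_2}$. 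Therefore $T^n_2=P_{\mathbb D^n}$ on $L^p_{\text{anti}}(\mathbb D^n,|J_{\mathbb C}\Phi_n|^{2-p})$, and hence $T^n_1=0$ there. The main obstacle I anticipate is the permutation bookkeeping in the computation of $\widetilde K_{T^n_2}$; the algebraic identities (Cauchy, Borchardt) themselves are classical.
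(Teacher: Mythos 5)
Your proposal is correct, but it takes a genuinely different route from the paper's. The paper proves the lemma by a telescoping induction: it introduces intermediate operators $P_1=T^n_2,\dots,P_n=P_{\mathbb D^n}$ and, using the elementary identity $(1-z_j\bar w_k)(1-z_k\bar w_j)=(1-z_j\bar w_j)(1-z_k\bar w_k)+(z_j-z_k)(\bar w_j-\bar w_k)$ repeatedly, shows that each successive difference of kernels is a linear combination of kernels that are $(j,k)$-symmetric in $w$ and therefore annihilate anti-symmetric functions. You instead anti-symmetrize the kernels in $w$ outright and identify the two resulting kernels via Cauchy's determinant formula together with Borchardt's identity $\det[a_{jk}^2]=\det[a_{jk}]\cdot\mathrm{perm}[a_{jk}]$ for Cauchy matrices; that identity is indeed stable under the diagonal column rescaling needed to put $[(1-z_j\bar w_k)^{-1}]$ into Cauchy form, and your bookkeeping checks out: the $w$-anti-symmetrization of $K_{\mathbb D^n}$ is $(n!\,\pi^n)^{-1}\det[(1-z_j\bar w_k)^{-2}]$, that of the kernel of $T^n_2$ is the Cauchy determinant times the permanent over $n!\,\pi^n$, and Borchardt's identity makes them equal. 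Note that your Cauchy-determinant expansion of $K_{T^n_2}$ alone does not finish the job---the individual summands for $\sigma\neq\mathrm{id}$ are not $(j,k)$-symmetric in $w$ when $\sigma$ is not a transposition---so the second, Borchardt step is genuinely needed; it is good that you included it. What your route buys is brevity and a conceptual statement (the anti-symmetric part of $K_{\mathbb D^n}$ factors as $J_{\mathbb C}\Phi_n(z)\overline{J_{\mathbb C}\Phi_n(w)}$ times a permanent of the Szeg\H{o}-type matrix), at the cost of invoking a classical but nontrivial external identity; the paper's argument is longer and more combinatorial but entirely self-contained. One shared implicit step worth a sentence in either write-up: replacing the kernel by its anti-symmetrization (or the paper's change of variables $w\mapsto\tau_{j,k}(w)$) requires $\int_{\mathbb D^n}|K(z;\bar w)||f(w)|\,dV(w)<\infty$ for fixed $z$, which holds because the kernels are bounded for fixed $z\in\mathbb D^n$ and H\"older's inequality gives $f\in L^1(\mathbb D^n)$ for $f\in L^p(\mathbb D^n,|J_{\mathbb C}\Phi_n|^{2-p})$ in the relevant range $p\geq 2$ (in particular at $p=\frac{2n}{n-1}$, where the lemma is applied).
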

\begin{proof}
	Recall that $\tau_{j,k}$ is the permutation that interchanges variables $w_j$ and $w_k$, and a kernel function $K(z;\bar w)$ on $\mathbb D^n\times \mathbb D^n$ is called $(j,k)$-symmetric in $w$ if  $K(z;\bar w)=K(z;\bar \tau_{j,k}(w))$.  
If $K(z;\bar w)$  is $(j,k)$-symmetric in $\bar w$, then for any anti-symmetric $f\in L^p_{\text{anti}}(\mathbb D^n,|J_{\mathbb C}\Phi_n|^{2-p})$, we have
\begin{align*}\int_{\mathbb D^n}K(z;\bar w)f(w)dV(w)&=-\int_{\mathbb D^n}K(z;\bar\tau_{j,k} (w))f(\tau_{j,k}(w))dV(w)\\&=-\int_{\mathbb D^n}K(z;\bar w)f(w)dV(w).\end{align*}
Thus operators with $(j,k)$-symmetric kernel functions in $w$ annihilate  $ L^p_{\text{anti}}(\mathbb D^n,|J_{\mathbb C}\Phi_n|^{2-p})$.

For $l=1,\dots,n$, we define the operator $P_l$ to be as follows:
\begin{equation}\label{4.16}P_{l}(h)(z)=\int_{\mathbb D^n}\frac{\prod_{1\leq j<k\leq l}(1-z_j\bar w_k)(1-z_k\bar w_j)\prod_{1\leq j<k\leq n,1\leq l< k\leq n}(z_j-z_k)(\bar w_j-\bar w_k)}{\pi^n\prod_{1\leq j\leq k\leq n}(1-z_k\bar w_j)(1-z_j\bar w_k)}h(w)dV(w).\end{equation}
Then $P_1=T^n_2$ and $P_n=P_{\mathbb D^n}$. 
	We claim that $P_{\mathbb D^n}=P_{l}$  on $L^p_{\text{anti}}(\mathbb D^n,|J_{\mathbb C}\Phi_n|^{2-p})$ for all $l=1,\dots,n$. Then $T^n_1=P_{\mathbb D^n}-P_1=0$ on $L^p_{\text{anti}}(\mathbb D^n,|J_{\mathbb C}\Phi_n|^{2-p})$.
 We prove the claim by induction on $l$. 
 
 Let $K_l$ denote the kernel function of $P_l$.
 When $l=2$,
\begin{align*}K_{2}(z;\bar w)&=\frac{(1-z_1\bar w_2)(1-z_2\bar w_1)\prod_{1\leq j<k\leq n,(j,k)\neq(1,2)}(z_j-z_k)(\bar w_j-\bar w_k)}{\pi^n\prod_{1\leq j\leq k\leq n}(1-z_k\bar w_j)(1-z_j\bar w_k)}.\end{align*}
Then
\begin{align*}&K_{2}(z;\bar w)-K_{1}(z;\bar w)\\=&\frac{((1-z_1\bar w_2)(1-z_2\bar w_1)-(z_1-z_2)(\bar w_1-\bar w_2))\prod_{1\leq j<k\leq n,(j,k)\neq(1,2)}(z_j-z_k)(\bar w_j-\bar w_k)}{\pi^n\prod_{1\leq j\leq k\leq n}(1-z_k\bar w_j)(1-z_j\bar w_k)}\\=&\frac{(1-z_1\bar w_1)(1-z_2\bar w_2)\prod_{1\leq j<k\leq n,(j,k)\neq(1,2)}(z_j-z_k)(\bar w_j-\bar w_k)}{\pi^n\prod_{1\leq j\leq k\leq n}(1-z_k\bar w_j)(1-z_j\bar w_k)}\\=&\frac{\prod_{1\leq j<k\leq n,(j,k)\neq(1,2)}(z_j-z_k)(\bar w_j-\bar w_k)}{\pi^n\prod_{j=3}^n(1-z_j\bar w_j)\prod_{j,k=1}^n(1-z_k\bar w_j)}.\end{align*}
It is not hard to check that $K_{2}-K_{1}$ is $(1,2)$-symmetric in $w$ which shows that $P_{1}=P_2$ on $L^p_{\text{anti}}(\mathbb D^n,|J_{\mathbb C}\Phi_n|^{2-p})$.

Suppose that $P_{1}=P_l$ on $L^p_{\text{anti}}(\mathbb D^n,|J_{\mathbb C}\Phi_n|^{2-p})$ for $l=m$. We show that $P_{m+1}=P_m$ on $L^p_{\text{anti}}(\mathbb D^n,|J_{\mathbb C}\Phi_n|^{2-p})$. Let $\mathcal R_m$ denote the power set \[\mathcal R_m:=\{\mathcal I:\mathcal I\subseteq \{1,2,\dots,m\}\}.\] Given $\mathcal I\in\mathcal R_m$, let $|\mathcal I|$ denote the cardinality of $\mathcal I$. 
For simplicity of notation, we set
$a_{j,k}=1-z_j\bar w_k$
and $b_{j,k}=(z_j-z_k)(\bar w_j-\bar w_k)$. Then for $j\neq k$, $a_{j,k}a_{k,j}=a_{j,j}a_{kk}+b_{j,k}.$
Note that
\begin{align*}
	\prod_{j=1}^{m}a_{j,m+1} a_{m+1,j}=&		\prod_{j=1}^{m}(a_{j,j} a_{m+1,m+1}+b_{j,m+1})\\=&	\sum_{\mathcal I\in \mathcal R_m}a_{m+1,m+1}^{|\mathcal I|}\prod_{j\in\mathcal I}a_{j,j} \prod_{k\in \mathcal I^c}b_{k,m+1}.
\end{align*}
We set
\[p_{\mathcal I}(z;\bar w):=a_{m+1,m+1}^{|\mathcal I|}\prod_{j\in\mathcal I}a_{j,j} \prod_{k\in \mathcal I^c}b_{k,m+1}.\] 
Then $$\prod_{j=1}^{m}a_{j,m+1}a_{m+1,j}=\prod_{j=1}^{m}(1-z_j\bar w_{m+1})(1-z_{m+1}\bar w_j)=\sum_{\mathcal I\in \mathcal R_m}p_{\mathcal I}(z;\bar w).$$
Let $K_m$ and $K_{m+1}$ be the kernel function of $P_m$ and $P_{m+1}$ respectively as in (\ref{4.16}). Let $K_{m,\mathcal I}$ denote the kernel function 
\[K_{m,\mathcal I}(z;\bar w):=\frac{p_{\mathcal I}(z;\bar w)\prod_{j<k\leq m}a_{j,k}a_{k,j}\prod_{j<k,m+1<k}b_{j,k}}{\pi^n\prod_{j\leq k}a_{j,k}a_{k,j}}.\]
We can express $K_m$ and $K_{m+1}$ in terms of $K_{m,\mathcal I}(z;\bar w)$:
\begin{align*}
	K_m(z;\bar w)&=\frac{\prod_{j<k\leq m}(1-z_j\bar w_k)(1-z_k\bar w_j)\prod_{j<k, m<k}(z_j-z_k)(\bar w_j-\bar w_k)}{\pi^n\prod_{j\leq k}(1-z_k\bar w_j)(1-z_j\bar w_k)}\\&=\frac{p_{\emptyset}(z;\bar w)\prod_{j<k\leq m}a_{j,k}a_{k,j}\prod_{j<k, m+1<k}b_{j,k}}{\pi^n\prod_{j\leq k}a_{j,k}a_{k,j}}\\&=K_{m,\emptyset}(z;\bar w),
\end{align*}
and 
\begin{align*}
	K_{m+1}(z;\bar w)&=\frac{\prod_{j<k\leq m+1}(1-z_j\bar w_k)(1-z_k\bar w_j)\prod_{j<k,m+1<k}(z_j-z_k)(\bar w_j-\bar w_k)}{\pi^n\prod_{j\leq k}(1-z_k\bar w_j)(1-z_j\bar w_k)}\\&=\frac{\sum_{\mathcal I\in \mathcal R_m}p_{\mathcal I}(z;\bar w)\prod_{j<k\leq m}(1-z_j\bar w_k)(1-z_k\bar w_j)\prod_{j<k,m+1<k}(z_j-z_k)(\bar w_j-\bar w_k)}{\pi^n\prod_{j\leq k}(1-z_k\bar w_j)(1-z_j\bar w_k)}\\&=\sum_{\mathcal I\in \mathcal R_m}K_{m,\mathcal I}(z;\bar w)=K_{m}(z;\bar w)+\sum_{\emptyset\neq\mathcal I\in \mathcal R_m}K_{m,\mathcal I}(z;\bar w).
\end{align*}
We show that for any nonempty $\mathcal I\in\mathcal R_m$, $K_{m,\mathcal I}$ is a linear combination of $(j,k)$-symmetric kernel functions. Then for anti-symmetric $f\in L^p_{\text{anti}}(\mathbb D^n,|J_{\mathbb C}\Phi_n|^{2-p})$,
\begin{align*}P_{m+1}(f)(z)&=\int_{\mathbb D^n}K_{m+1}(z;\bar w)f(w)dV(w)\\&=\int_{\mathbb D^n}\sum_{\mathcal I\in\mathcal R_m}K_{m,\mathcal I}(z;\bar w)f(w)dV(w)\\&=\int_{\mathbb D^n}K_{m}(z;\bar w)f(w)dV(w)\\&=P_{m}(f)(z),
\end{align*} which completes the induction and the proof of the lemma.
When $|\mathcal I|>1$, there exists $j_1,j_2\in \mathcal I$, and 
\begin{align*}K_{m,\mathcal I}(z;\bar w)&=\frac{p_{\mathcal I}(z;\bar w)\prod_{j<k\leq m}a_{j,k}a_{k,j}\prod_{j<k,m+1<k}b_{j,k}}{\pi^n\prod_{j\leq k}a_{j,k}a_{k,j}}\\&=\frac{a_{m+1,m+1}^{|\mathcal I|}\prod_{k\in\mathcal I}a_{k,k} \prod_{j\in \mathcal I^c}b_{j,m+1}\prod_{ j<k\leq m}a_{j,k}a_{k,j}\prod_{j<k,m+1<k}b_{j,k}}{\pi^n\prod_{j\leq k}a_{j,k}a_{k,j}}.\end{align*}
It's easy to see that $K_{m,\mathcal I}(z;\bar w)$ is $(j_1,j_2)$-symmetric. 

Now we turn to consider the case when $\mathcal I=\{j_0\}$. Without loss of generality, we let $j_0=1$. 
\begin{align*}K_{m,\{1\}}(z;\bar w)=&\frac{p_{\{1\}}(z;\bar w)\prod_{j<k\leq m}a_{j,k}a_{k,j}\prod_{j<k,m+1<k}b_{j,k}}{\pi^n\prod_{j\leq k}a_{j,k}a_{k,j}}\\=&\frac{a_{m+1,m+1}a_{1,1} \prod_{k=2}^mb_{j,m+1}\prod_{j<k\leq m}a_{j,k}a_{k,j}\prod_{j<k,m+1<k}b_{j,k}}{\pi^n\prod_{j\leq k}a_{j,k}a_{k,j}}\\=&\frac{a_{m+1,m+1}a_{1,1} (a_{2,m+1}a_{m+1,2}-a_{2,2}a_{m+1,m+1})\prod_{k=3}^mb_{j,m+1}\prod_{j<k\leq m}a_{j,k}a_{k,j}\prod_{j<k,m+1<k}b_{j,k}}{\pi^n\prod_{j\leq k}a_{j,k}a_{k,j}}
	\\=&\frac{a_{m+1,m+1}a_{1,1} a_{2,m+1}a_{m+1,2}\prod_{k=3}^mb_{j,m+1}\prod_{j<k\leq m}a_{j,k}a_{k,j}\prod_{j<k,m+1<k}b_{j,k}}{\pi^n\prod_{j\leq k}a_{j,k}a_{k,j}}-K_{m,\{1,2\}}(z;\bar w),
\end{align*}
where $K_{m,\{1,2\}}(z;\bar w)$ is $(1,2)$-symmetric in $w$.

Since $b_{3,m+1}=a_{3,m+1}a_{m+1,3}-a_{3,3}a_{m+1,m+1}$, we have
\begin{align*}&\frac{a_{m+1,m+1}a_{1,1} a_{2,m+1}a_{m+1,2}\prod_{k=3}^mb_{j,m+1}\prod_{j<k\leq m}a_{j,k}a_{k,j}\prod_{j<k,m+1<k}b_{j,k}}{\pi^n\prod_{j\leq k}a_{j,k}a_{k,j}}\\=&\frac{a_{m+1,m+1}a_{1,1} a_{2,m+1}a_{m+1,2}a_{3,m+1}a_{m+1,3}\prod_{k=4}^mb_{j,m+1}\prod_{j<k\leq m}a_{j,k}a_{k,j}\prod_{j<k,m+1<k}b_{j,k}}{\pi^n\prod_{j\leq k}a_{j,k}a_{k,j}}\\&-\frac{a^2_{m+1,m+1}a_{1,1} a_{2,m+1}a_{m+1,2}a_{3,3}\prod_{k=4}^mb_{j,m+1}\prod_{j<k\leq m}a_{j,k}a_{k,j}\prod_{j<k,m+1<k}b_{j,k}}{\pi^n\prod_{j\leq k}a_{j,k}a_{k,j}},
\end{align*}
where the negative term above is $(1,3)$-symmetric in $w$.
Repeating the above process using the identity $b_{j,m+1}=a_{j,m+1}a_{m+1,j}-a_{j,j}a_{m+1,m+1}$ until no $b_{j,m+1}$ term left, we obtain 
\begin{align*}K_{m,\{1\}}(z;\bar w)-\frac{a_{m+1,m+1}a_{1,1}\prod_{k=2}^m a_{k,m+1}a_{m+1,k}\prod_{j<k\leq m}a_{j,k}a_{k,j}\prod_{j<k,m+1<k}b_{j,k}}{\pi^n\prod_{j\leq k}a_{j,k}a_{k,j}}
\end{align*} 
is a linear combination of functions  that are $(1,j)$-symmetric in $w$.
Since the function 
\[\frac{a_{m+1,m+1}a_{1,1}\prod_{k=2}^m a_{k,m+1}a_{m+1,k}\prod_{j<k\leq m}a_{j,k}a_{k,j}\prod_{j<k,m+1<k}b_{j,k}}{\pi^n\prod_{j\leq k}a_{j,k}a_{k,j}}\]
is $(1,m+1)$-symmetric in $w$, we are done.
\end{proof}
Since $T^n_2=P_{\mathbb D^n}$ on $L^p_{\text{anti}}(\mathbb D^n,|J_{\mathbb C}\Phi_n|^{2-p}|)$, the next theorem implies Theorem \ref{main1} for general $n$.
\begin{thm}\label{thm4.4}
	$T^n_2$ 
	is unbounded on $L^p_{\text{anti}}(\mathbb D^n,|J_{\mathbb C}\Phi_n|^{2-p})$ for $p=\frac{2n}{n-1}$.
\end{thm}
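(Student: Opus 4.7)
Following the $n=2$ strategy, I would first introduce the conjugated operator
\[\tilde T^n(h)(z):=\bigl(J_{\mathbb C}\Phi_n(z)\bigr)^{-1}T^n_2\bigl(h\,\bar J_{\mathbb C}\Phi_n\bigr)(z),\]
whose integral kernel is
\[\tilde K(z;\bar w)=\frac{\prod_{j<k}(\bar w_j-\bar w_k)^2}{\pi^n\prod_{j}(1-z_j\bar w_j)^2\prod_{j<k}(1-z_k\bar w_j)(1-z_j\bar w_k)}.\]
Multiplication by $\bar J_{\mathbb C}\Phi_n$ is an isometric bijection from $L^p_{\text{sym}}(\mathbb D^n,|J_{\mathbb C}\Phi_n|^2)$ onto $L^p_{\text{anti}}(\mathbb D^n,|J_{\mathbb C}\Phi_n|^{2-p})$, so Theorem \ref{thm4.4} is equivalent to showing that $\tilde T^n$ is unbounded on $L^p_{\text{sym}}(\mathbb D^n,|J_{\mathbb C}\Phi_n|^2)$ at $p=\frac{2n}{n-1}$. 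Since $h\,\bar J_{\mathbb C}\Phi_n$ is anti-symmetric whenever $h$ is symmetric, Lemma \ref{lem4.3} yields the crucial identity
\[\tilde T^n(h)(z)=\frac{P_{\mathbb D^n}(h\,\bar J_{\mathbb C}\Phi_n)(z)}{J_{\mathbb C}\Phi_n(z)},\]
which reduces the evaluation of $\tilde T^n(h)$ to the product-form action of $P_{\mathbb D^n}$ on $\mathbb D^n$.

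For the test function I would combine the Vandermonde expansion $\bar J_{\mathbb C}\Phi_n(w)=(-1)^{\binom n2}\sum_{\sigma\in\mathcal S_n}\text{sgn}(\sigma)\prod_j\bar w_j^{\sigma(j)-1}$ with the one-variable identity $P_{\mathbb D}\bigl(\bar w^k/(1-sw)^2\bigr)(z)=s^k/(1-sz)^2$. For $n$ distinct real parameters $s_1,\dots,s_n$ close to $1$, take the symmetric ``permanent-type'' test function
\[h_{\vec s}(w):=\sum_{\pi\in\mathcal S_n}\prod_{j=1}^n\frac{1}{(1-s_{\pi(j)}w_j)^2}.\]
A direct double summation over the pair $(\pi,\sigma)$ then gives the closed form
\[P_{\mathbb D^n}(h_{\vec s}\,\bar J_{\mathbb C}\Phi_n)(z)=(-1)^{\binom n2}\,V(\vec s)\,\det\!\Bigl(\tfrac{1}{(1-s_jz_k)^2}\Bigr)_{j,k},\qquad V(\vec s):=\prod_{i<j}(s_j-s_i),\]
and since the determinant is anti-symmetric in $z$ it is divisible by $J_{\mathbb C}\Phi_n(z)$, so $\tilde T^n(h_{\vec s})(z)$ is an explicit symmetric rational function. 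A single-parameter family is then obtained by letting the $s_j$'s approach $1$ along a path on which they remain separated at scale $1-s$, ensuring $|V(\vec s)|\approx(1-s)^{\binom n2}$ with $s:=\min_j s_j$.

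For the $L^p$ estimates I would restrict integration to the polyhedral sector $U(s)^n\subset\mathbb D^n$ on which every $1-sz_j$ has bounded argument, then integrate one variable at a time via Fubini using the one-variable Forelli--Rudin asymptotics on $U(s)$ established in the proof of Theorem \ref{main2}. The weight $|J_{\mathbb C}\Phi_n|^2=\prod_{j<k}|w_j-w_k|^2$ contributes an integrable polynomial factor in each variable, and the telescoping identity $(z_j-z_k)=s^{-1}\bigl((1-sz_k)-(1-sz_j)\bigr)$ rewrites $\tilde T^n(h_{\vec s})(z)$ as a finite signed combination of monomials $\prod_j(1-sz_j)^{-\alpha_j}$ with $\sum_j\alpha_j=2n-1$. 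At the critical exponent $p=\frac{2n}{n-1}$, exactly one of the one-variable integrals in the Fubini reduction lands on the borderline Forelli--Rudin case $a=2$, producing an extra factor $-\log(1-s)$ in $\|\tilde T^n(h_{\vec s})\|^p_{L^p_{\text{sym}}(|J_{\mathbb C}\Phi_n|^2)}$ that is absent from $\|h_{\vec s}\|^p$.

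The main obstacle is a pointwise lower bound for $|\tilde T^n(h_{\vec s})|$ on a suitable subregion of $U(s)^n$, since the explicit formula is a signed sum of $n!$ monomials which can partially cancel. In the $n=2$ case this reduced to the one-line inequality $|(1-sz_1)^{-1}+(1-sz_2)^{-1}|\geq\tfrac12|1-sz_1|^{-1}$ on $U(s)^2$, but for $n\geq3$ one must identify a subsector on which a single distinguished ``off-diagonal'' pole pattern, say with poles of order $2$ in each of $z_1,\dots,z_{n-1}$ and of order $1$ in $z_n$, dominates all other terms so that the triangle inequality still yields the logarithmic lower bound. Once such a dominating sector is isolated, the remaining Forelli--Rudin bookkeeping proceeds along the lines of the $n=2$ proof and gives $\|\tilde T^n(h_{\vec s})\|/\|h_{\vec s}\|\gtrsim\bigl(-\log(1-s)\bigr)^{1/p}\to\infty$ as $s\to1$, completing the proof.
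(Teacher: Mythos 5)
Your reduction to $\tilde T^n$ on $L^p_{\text{sym}}(\mathbb D^n,|J_{\mathbb C}\Phi_n|^2)$ is exactly the paper's, and your closed-form identity $P_{\mathbb D^n}(h_{\vec s}\,\bar J_{\mathbb C}\Phi_n)=\pm V(\vec s)\det\bigl((1-s_iz_k)^{-2}\bigr)$ is correct and is an attractive alternative to the paper's computation (the paper instead takes a one-parameter family with poles of order $n$ in $n-1$ of the variables and evaluates $\tilde T^n(h_s)$ by writing $(1-ws)^{-n}$ as a polynomial in the operator $I+w\partial_w$ applied to the reproducing kernel). However, the proposal has a genuine gap, and it sits precisely where you flag your ``main obstacle'': for $n\geq 3$ you never produce the lower bound on $|\tilde T^n(h_{\vec s})|$ that defeats the cancellation among the $n!$ signed terms. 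This is not a routine finishing step --- it is the heart of the theorem, and the paper spends most of its proof on it: after a partial-fraction expansion of the Vandermonde it integrates over \emph{nested} annular sectors $U_n(s,j)=U_n(s)\cap\{(5n!)^{2j}(1-s)<|z-1/s|<1\}$, one per variable, so that the identity permutation's contribution exceeds each off-diagonal permutation's by the explicit factor $(5n!)^{4\sum_m(m^2-m\tau^{-1}(m))}\geq 4n!$ (using that $\sum_m(m^2-m\tau^{-1}(m))\geq 1$ for $\tau\neq I$), and separately shows the $\tau(n)=n$ terms dominate because only they produce the critical exponent $2$ in the $z_n$-integral. Saying ``one must identify a subsector on which a single pole pattern dominates'' names the problem but does not solve it.

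A second, quantitative gap: the exponent bookkeeping is asserted rather than derived, and because your test function differs from the paper's you cannot borrow its computations. In particular the claim that $\tilde T^n(h_{\vec s})$ is a signed sum of monomials $\prod_j(1-sz_j)^{-\alpha_j}$ with $\sum_j\alpha_j=2n-1$ matches the paper only at $n=2$; for the paper's test function the image has total pole order $n^2-1$, and for your determinant quotient a degree count gives numerator degree $n-1$ against denominator degree $2n$ in each variable after clearing the Vandermonde, so the pole structure as the $s_i$ coalesce at scale $1-s$ must be worked out from scratch. Without (a) the verified growth rate of $\|h_{\vec s}\|^p_{L^p_{\text{sym}}(|J_{\mathbb C}\Phi_n|^2)}$, (b) the matching power of $(1-s)$ plus a genuine extra $-\log(1-s)$ in $\|\tilde T^n(h_{\vec s})\|^p$, and (c) the cancellation control in (a) as well --- note even the upper bound for $\|h_{\vec s}\|^p$ requires the partial-fraction identity for $\prod_{j<k}(w_j-w_k)/\prod_l(1-w_ls)^{n-2}$ in the paper --- the argument does not yet establish unboundedness at $p=\tfrac{2n}{n-1}$ for any $n\geq 3$.
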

\begin{proof}The proof for the case $n>2$ follows from a similar argument as in the proof of Theorem \ref{main2}. 
	Let $\tilde T^n$  denote the operator given as follows:
\[\tilde T^n(h)(z):=(J_{\mathbb C}\Phi_n(z))^{-1}T^n_2(h\bar J_{\mathbb C}\Phi_n)(z).\]

Then \begin{align}
	\tilde T^n(h)(z)&=\int_{\mathbb D^n}\frac{\prod_{j<k}(\bar w_j-\bar w_k)^2h(w)dV}{\pi^n\prod_{j\leq k}(1-z_k\bar w_j)(1-z_j\bar w_k)},
\end{align} and $\|T^n_2\|_{L^p_{\text{anti}}(\mathbb D^n,|J_{\mathbb C}\Phi_n|^{2-p})}=\|\tilde T^n\|_{L^p_{\text{sym}}(\mathbb D^n,|J_{\mathbb C}\Phi_n|^{2})}$ provided one of the norms is finite. Thus it suffices to show that $\tilde T^n$ is unbounded on $L^p_{\text{sym}}(\mathbb D^n,|J_{\mathbb C}\Phi_n|^{2})$ for $p=\frac{2n}{n-1}$.

 Recall that $\mathcal S_n$ is the set of all permutations of $\{z_1,\dots,z_n\}$. For $s\in(0,1)$, we set
$$h_s(z)=\sum_{\tau\in \mathcal S_n}\frac{1}{\prod_{j=1}^{n-1}(1-\tau(z_j)s)^n}.$$
Then $h_s$ is a symmetric function with
\begin{align}\label{4.190}
	\|h_s\|^p_{L^p_{\text{sym}}(\mathbb D^n,|J_{\mathbb C}\Phi_n|^{2})}\nonumber&=\int_{\mathbb D^n}\left|\sum_{\tau\in \mathcal S_n}\frac{1}{\pi^{n-1}\prod_{l=1}^{n-1}(1-\tau(w_l)s)^n}\right|^p\prod_{1\leq j<k\leq n }| w_j- w_k|^2dV(w)\nonumber\\&\lesssim\int_{\mathbb D^n}\frac{\prod_{1\leq j<k\leq n}| w_j- w_k|^2}{\prod_{l=1}^{n-1}|1-w_ls|^{np}}dV(w)\nonumber\\&\lesssim\int_{\mathbb D^{n-1}}\frac{\prod_{1\leq j<k\leq n-1}| w_j- w_k|^2}{\prod_{l=1}^{n-1}|1-w_ls|^{np}}dV(w_1,\dots,w_{n-1})\nonumber\\&\lesssim\int_{\mathbb D^{n-1}}\frac{\prod_{1\leq j<k\leq n-1}| w_j- w_k|^2}{\prod_{l=1}^{n-1}|1-w_ls|^{2n-4}}\frac{1}{\prod_{l=1}^{n-1}|1-w_ls|^{np+4-2n}}dV(w_1,\dots,w_{n-1}).
\end{align}

To evaluate the integral above we need an $(n-2)$-step procedure to eliminate the numerator of the integrand, i.e.
we rewrite 
\[\frac{\prod_{1\leq j<k\leq n-1}(w_j- w_k)}{\prod_{l=1}^{n-1}(1-w_ls)^{n-2}}.\]
Step 1. Recall that by partial fractions:
\begin{align*}\frac{1}{\prod_{j=1}^{n-1}(1-w_js)}=\sum_{j=1}^{n-1}\frac{c_j}{(1-w_js)},\end{align*}
where
$c_j=\frac{1}{s^{n-2}\prod_{k=1,k\neq j}^{n-1}({w_j}-{w_k})}$. Then 
\begin{align*}\frac{\prod_{1\leq j<k\leq n-1}(w_j- w_k)}{\prod_{l=1}^{n-1}(1-w_ls)^{n-2}}&=\sum_{j_1=1}^{n-1}\frac{\prod_{1\leq j< k\leq n-1}(w_j- w_k)}{s^{n-2}(1-{w}_{j_1}s)\prod_{l=1}^{n-1}(1-w_ls)^{n-3}\prod_{k=1,k\neq j_1}^{n-1}({w_{j_1}}-{w_k})}.
\end{align*}
Step 2. Now we focus on the $j_1$th term in the sum above 
$$\frac{\prod_{1\leq j< k\leq n-1}(w_j- w_k)}{s^{n-2}(1-{w}_{j_1}s)\prod_{l=1}^{n-1}(1-w_ls)^{n-3}\prod_{k=1,k\neq j_1}^{n-1}({w_{j_1}}-{w_k})}.$$
Applying the partial fractions yields
$$\frac{1}{\prod_{j=1,j\neq j_1}^{n-1}(1-w_js)}=\sum_{j=1,j\neq j_1}^{n}\frac{1}{s^{n-3}(1- w_js)\prod_{k=1,k\neq j_1}^{n-1}(w_{j_1}-w_{k})},$$
and 
\begin{align*}
	&\frac{\prod_{1\leq j< k\leq n-1}(w_j- w_k)}{s^{n-2}(1-{w}_{j_1}s)\prod_{l=1}^{n-1}(1-w_ls)^{n-3}\prod_{{k=1,k\neq j_1}}^{n-1}({w_{j_1}}-{w_k})}\\=&\sum_{j_1=1}^{n-1}\sum_{\substack{j_2=1\\j_2\neq j_1}}^{n-1}\frac{\prod_{1\leq j<k\leq n-1}( w_j-w_k)}{s^{2n-5}(1-{w}_{j_1}s)^2(1-{w}_{j_2}s)\prod_{j=1}^{n-1}(1-w_js)^{n-4}\prod_{{k=1,k\neq j_1}}^{n-1}({w_{j_1}}-{w_k})\prod_{{k=1,k\neq j_1,j_2}}^{n-1}({w_{j_2}}-{w_k})}.
\end{align*}

Step 3. As in Step 2, we turn to the term with sub-indices $(j_1,j_2)$ in the sum above and continue the process by doing partial fractions to
$$\frac{1}{\prod_{j=1,j\nin \{j_1,j_2\}}^{n-1}(1- w_js)}.$$
Repeat this process. Then after $n-2$ steps, 
we obtain
\begin{align}\label{4.202}
\frac{\prod_{1\leq j<k\leq n-1}(w_j- w_k)}{\prod_{l=1}^{n-1}(1-w_ls)^{n-2}}\nonumber&=\sum_{(l_1, l_2,\dots,l_{n-1})\in \mathcal S_{n-1}}\frac{s^{-\frac{1}{2}(n-1)(n-2)}\prod_{1\leq j<k\leq n-1}(w_j- w_k)}{\prod_{1\leq j<k\leq n-1}(w_{l_j}- w_{l_k})\prod_{t=1}^{n-1}(1- w_{l_t}s)^{n-1-t}}\nonumber\\&=\sum_{(l_1, l_2,\dots,l_{n-1})\in \mathcal S_{n-1}}\frac{\text{sgn}((l_1,\dots,l_{n-1}))s^{-\frac{1}{2}(n-1)(n-2)}}{\prod_{t=1}^{n-1}(1- w_{l_t}s)^{n-1-t}}.
\end{align}
Here $\text{sgn}((l_1,\dots,l_{n-1}))$ is the sign of the permutation $(l_1,\dots,l_{n-1})$.

Applying this identity to (\ref{4.190}) and using the triangle inequality, we obtain
\begin{align}
	&\|h_s\|^p_{L^p_{\text{sym}}(\mathbb D^n,|J_{\mathbb C}\Phi_n|^{2})}\nonumber\\\lesssim&\int_{\mathbb D^{n-1}}\frac{\prod_{1\leq j<k\leq n-1}| w_j- w_k|^2}{\prod_{l=1}^{n-1}|1-w_ls|^{2n-4}}\frac{1}{\prod_{l=1}^{n-1}|1-w_ls|^{np+4-2n}}dV(w_1,\dots,w_{n-1})\nonumber\\\lesssim&\sum_{(l_1, l_2,\dots,l_{n-1})\in \mathcal S_{n-1}}\int_{\mathbb D^{n-1}}\frac{s^{-(n-1)(n-2)}}{\prod_{t=1}^{n-1}|1- w_{l_t}s|^{2n-2-2t}}.\frac{1}{\prod_{l=1}^{n-1}|1-w_ls|^{np+4-2n}}dV(w_1,\dots,w_{n-1})\nonumber\\\lesssim&\int_{\mathbb D^{n-1}}\frac{1}{\prod_{l=1}^{n-1}|1-w_ls|^{np+2-2l}}dV(w_1,\dots,w_{n-1}).
\end{align}
For $p=\frac{2n}{n-1}$, $np+2-2l\geq np+2-2(n-1)>2$. Thus the Forelli-Rudin estimates (\ref{**}) imply 
\begin{align}
&\int_{\mathbb D^{n-1}}\frac{1}{\prod_{l=1}^{n-1}|1-w_ls|^{np+2-2l}}dV(w_1,\dots,w_{n-1})\nonumber\\=&\prod_{l=1}^{n-1}\int_{\mathbb D}\frac{1}{|1-w_ls|^{np+2-2l}}dV(w_1,\dots,w_{n-1})\nonumber\\\approx&\prod_{l=1}^{n-1}(1-s)^{-np+2l}=(1-s)^{-n^2-n}.
\end{align}
Hence $	\|h_s\|^p_{L^p_{\text{sym}}(\mathbb D^n,|J_{\mathbb C}\Phi_n|^{2})}\lesssim(1-s)^{-n^2-n}.$

Now we turn to compute $\tilde T^n(h_s)$.  Let $I$ denote the identity operator. 
For the variable $w_j$, let $\mathcal D_{w_j}$ denote the partial differential operator
\[\mathcal D_{w_j}=I+w_j\frac{\partial}{\partial w_j}.\]
For any $k\in \mathbb N$ and holomorphic function $f(w)=\sum_{\alpha\in \mathbb N^n} c_{\alpha}w^{\alpha}$ on $\mathbb D^n$,
\[(\mathcal D_{w_j})^kf(w)=\sum_{\alpha\in \mathbb N^n} c_{\alpha}(\alpha_j+1)^kw^\alpha.\]
For each integer $k>2$, $$\frac{1}{(1-w_js)^k}=\sum_{m=0}^{\infty}(m+1)_{k-1}s^mw_j^m=\sum_{m=0}^{\infty}(m+2)_{k-2}((m+1)s^mw_j^m)$$ where the Pochhammer symbol  $(m+2)_{k-2}=(m+2)\dot(m+3)\dots(m+k-1)$ is a polynomial in $m$ of degree $k-2$. Thus, there exists a polynomial $q_{k-2}$ of degree $k-2$ such that 
$$\frac{1}{(1-w_js)^k}=q_{k-2}(\mathcal D_{w_j})\left(\frac{1}{\pi(1-w_js)^2}\right).$$
For holomorphic functions $f,g$ on $\mathbb D^n$ with $f(w)=\sum_{\alpha}c_{\alpha}w^\alpha$ and $g(w)=\sum_{\alpha}d_{\alpha}w^\alpha$,
\begin{align}
	\int_{\mathbb D^n}f\overline{\prod_{j=1}^{n}q_{k-2}(\mathcal D_{w_j})(g)}dV=&\int_{\mathbb D^n}\left(\sum_{\alpha}c_{\alpha}w^\alpha\right) \left(\sum_{\alpha}d_{\alpha}\prod_{j=1}^{n}q_{k-2}(\alpha_j+1)\bar w^\alpha\right)dV(w)\nonumber\\=&\sum_{\alpha}c_{\alpha}d_{\alpha}\prod_{j=1}^{n}q_{k-2}(\alpha_j+1)\int_{\mathbb D^n}|w|^{2\alpha}dV(w)\nonumber\\=&\int_{\mathbb D^n}\left(\sum_{\alpha}c_{\alpha}\prod_{j=1}^{n}q_{k-2}(\alpha_j+1)w^\alpha\right) \left(\sum_{\alpha}d_{\alpha}\bar w^\alpha\right)dV(w)\nonumber\\=&\int_{\mathbb D^n}\prod_{j=1}^{n}q_{k-2}(\mathcal D_{w_j})(f)(w)\bar g(w)dV(w).
\end{align}
Therefore, we have
\begin{align*}
	&\tilde T^n(h_s)(z)\nonumber\\=&\int_{\mathbb D^n}\frac{\prod_{1\leq j< k\leq n}(\bar w_j-\bar w_k)^2}{\pi^n\prod_{m=1}^n(1-z_m\bar w_m)\prod_{j,k=1}^n(1-z_j\bar w_k)}\sum_{\tau\in \mathcal S_n}\frac{1}{\prod_{j=1}^{n-1}(1-\tau(w_j)s)^n}dV(w)\\=&\int_{\mathbb D^n}\frac{\prod_{1\leq j< k\leq n}(\bar w_j-\bar w_k)^2}{\pi^n\prod_{m=1}^n(1-z_m\bar w_m)\prod_{j,k=1}^n(1-z_j\bar w_k)}\sum_{\tau\in \mathcal S_n}\prod_{j=1}^{n-1} q_{n-2}(\mathcal D_{\tau(w_j)})\left(\frac{1}{\pi(1-\tau(w_j)s)^2}\right)dV(w)
	\\=&\int_{\mathbb D^n}\sum_{\tau\in \mathcal S_n}\prod_{j=1}^{n-1} q_{n-2}(\mathcal D_{\tau(\bar w_j)})\left(\frac{\prod_{1\leq j< k\leq n}(\bar w_j-\bar w_k)^2}{\pi^n\prod_{m=1}^n(1-z_m\bar w_m)\prod_{j,k=1}^n(1-z_j\bar w_k)}\right)\left(\frac{1}{\pi^{n-1}\prod_{j=1}^{n-1}(1-\tau(\bar w_j)s)^2}\right)dV(w)
		\\=&\int_{\mathbb D^n}\sum_{\tau\in \mathcal S_n}\prod_{j=1}^{n-1} q_{n-2}(\mathcal D_{\tau(\bar w_j)})\left(\frac{\prod_{1\leq j< k\leq n}(\bar w_j-\bar w_k)^2}{\pi^n\prod_{m=1}^n(1-z_m\bar w_m)\prod_{j,k=1}^n(1-z_j\bar w_k)}\right)\pi K_{\mathbb D^n}(w;\tau(s,\dots,s,0))dV(w)
	\\=&\sum_{\tau\in \mathcal S_n}\prod_{j=1}^{n-1} q_{n-2}(\mathcal D_{\tau(\bar w_j)})\left(\frac{\prod_{1\leq j< k\leq n}(\bar w_j-\bar w_k)^2}{\pi^{n-1}\prod_{m=1}^n(1-z_m\bar w_m)\prod_{j,k=1}^n(1-z_j\bar w_k)}\right)\bigg|_{\bar w=\tau(s,\dots,s,0)}.
\end{align*}
We claim that there is a constant $c_n$ such that
\begin{align}\label{4.220}&\prod_{j=1}^{n-1} q_{n-2}(\mathcal D_{\tau(\bar w_j)})\left(\frac{\prod_{1\leq j< k\leq n}(\bar w_j-\bar w_k)^2}{\pi^{n-1}\prod_{m=1}^n(1-z_m\bar w_m)\prod_{j,k=1}^n(1-z_j\bar w_k)}\right)\bigg|_{\bar w=\tau(s,\dots,s,0)}\nonumber\\=&\frac{c_ns^{n(n-1)}}{\prod_{m=1}^{n-1}(1-\tau(z_m)s)\prod_{l=1}^n(1-z_ls)^{n-1}}.
\end{align}
By symmetry, it suffices to show (\ref{4.220}) for the case when $\tau$ is the identity map, i.e.
\begin{align}\label{4.24}&\prod_{j=1}^{n-1} q_{n-2}(\mathcal D_{\bar w_j})\left(\frac{\prod_{1\leq j< k\leq n}(\bar w_j-\bar w_k)^2}{\pi^{n-1}\prod_{m=1}^n(1-z_m\bar w_m)\prod_{j,k=1}^n(1-z_j\bar w_k)}\right)\bigg|_{\bar w=(s,\dots,s,0)}\nonumber\\=&\frac{c_ns^{n(n-1)}}{\prod_{m=1}^{n-1}(1-z_ms)\prod_{l=1}^n(1-z_ls)^{n-1}}.\end{align}
Set $\bar \partial_j=\frac{\partial}{\partial \bar w_j}$. For a multi-index $\mathbf l=(l_1,\dots,l_{n})$, set $\bar \partial^{\mathbf l}=\bar \partial_1^{l_1}\dots\bar \partial_n^{l_n}$. Then by the product rule, $\mathcal D_{\bar w_j}^k=\sum_{l=0}^{k}c_{k,l}\bar w_j^l\bar \partial^l_j$. Therefore
$$\prod_{j=1}^{n-1}q_{n-2}(\mathcal D_{\bar w_j})=\prod_{j=1}^{n-1}\left(\sum_{l_j=0}^{n-2}d_{l_j} \bar w_j^{l_j}\bar \partial^{l_j}_j\right)=\sum_{\mathbf l\in\{0,1,\dots, n-2\}^{n-1}}d_{l_1}\cdots d_{l_{n-1}}\bar w^{\mathbf l}\bar \partial^{\mathbf l},$$ for some constants $d_{l_j}$.
Note that for $\mathbf l=(l_1,\dots,l_{n-1})\in \{0,1,\dots, n-2\}^{n-1}$,
\[\bar \partial^{\mathbf l}\left(\frac{\prod_{1\leq j< k\leq n}(\bar w_j-\bar w_k)^2}{\pi^{n-1}\prod_{m=1}^n(1-z_m\bar w_m)\prod_{j,k=1}^n(1-z_j\bar w_k)}\right)\]
can be expressed as a linear combination of terms of the form
\[\bar \partial^{\mathbf m}\left({\prod_{1\leq j< k\leq n}(\bar w_j-\bar w_k)^2}\right)\bar \partial^{\mathbf l-\mathbf m}\left(\frac{1}{\pi^{n-1}\prod_{m=1}^n(1-z_m\bar w_m)\prod_{j,k=1}^n(1-z_j\bar w_k)}\right),\]
where $\mathbf m=(m_1,\dots,m_{n-1})$ with $m_j\leq l_j$ for all $j$ and $\mathbf l-\mathbf m=(l_1-m_1,\dots,l_{n-1}-m_{n-1})$. 

Since $l_j\leq n-2$ for each $j$, the sum $$|\mathbf m|=\sum m_j\leq \sum l_j\leq (n-1)(n-2).$$Thus, the polynomial $\bar \partial^{\mathbf m}\left({\prod_{1\leq j< k\leq n}(\bar w_j-\bar w_k)^2}\right)$ is of total degree  $n(n-1)-|\mathbf m|$ which is at least $n(n-1)-(n-1)(n-2)=2(n-1)$. Note also that for $\bar w=(s,\dots,s,0)$, the factor $(\bar w_j-\bar w_k)\neq 0$ if and only if either $j$ or $k$ equals $n$. It is not hard to see that the polynomial $\prod_{k=1}^{n-1}(\bar w_k-\bar w_n)^2$
 is the only divisor of $\prod_{1\leq j< k\leq n}(\bar w_j-\bar w_k)^2$ that has degree at least $2(n-1)$ and does not vanish at $(s,\dots,s,0)$. Hence,
 $$\bar \partial^{\mathbf m}\left({\prod_{1\leq j< k\leq n}(\bar w_j-\bar w_k)^2}\right)\bigg|_{(s,\dots,s,0)}\neq 0$$ if and only if $|\mathbf m|=(n-2)(n-1)$, i.e. $\mathbf m=(n-2,\dots,n-2)$. In this case, we have
 $$\prod_{j=1}^{n-1}\bar \partial_j^{n-2}\left({\prod_{1\leq j< k\leq n}(\bar w_j-\bar w_k)^2}\right)\bigg|_{(s,\dots,s,0)}=c_n\prod_{k=1}^{n-1}(\bar w_k-\bar w_n)^2|_{(s,\dots,s,0)}=c_ns^{2n-2}$$
 for some constant $c_n$.
Therefore, 
\begin{align}&\prod_{j=1}^{n-1} q_{n-2}(\mathcal D_{\bar w_j})\left(\frac{\prod_{1\leq j< k\leq n}(\bar w_j-\bar w_k)^2}{\pi^{n-1}\prod_{m=1}^n(1-z_m\bar w_m)\prod_{j,k=1}^n(1-z_j\bar w_k)}\right)\bigg|_{\bar w=(s,\dots,s,0)}\nonumber\\=&\sum_{\mathbf l\in\{0,1,\dots, n-2\}^{n-1}}d_{l_1}\cdots d_{l_{n-1}}\bar w^{\mathbf l}\bar \partial^{\mathbf l}\left(\frac{\prod_{1\leq j< k\leq n}(\bar w_j-\bar w_k)^2}{\pi^{n-1}\prod_{m=1}^n(1-z_m\bar w_m)\prod_{j,k=1}^n(1-z_j\bar w_k)}\right)\bigg|_{\bar w=(s,\dots,s,0)}\nonumber\\=&\left(\frac{d_{n-2}^{n-1}\prod_{j=1}^{n-1}\left(\bar w_j^{n-2}\bar \partial_j^{n-2}\right)\left({\prod_{1\leq j< k\leq n}(\bar w_j-\bar w_k)^2}\right)}{\pi^{n-1}\prod_{m=1}^n(1-z_m\bar w_m)\prod_{j,k=1}^n(1-z_j\bar w_k)}\right)\bigg|_{\bar w=(s,\dots,s,0)}\nonumber\\=&\frac{d^{n-1}_{n-2}c_ns^{n(n-1)}}{\prod_{m=1}^{n-1}(1-z_ms)\prod_{l=1}^n(1-z_ls)^{n-1}},\end{align}
which proves the claim (\ref{4.24}) and gives
\begin{equation}
	\tilde T^n(h_s)(z)=\sum_{\tau\in\mathcal S_n}\frac{d^{n-1}_{n-2}c_ns^{n(n-1)}}{\prod_{m=1}^{n-1}(1-\tau(z_m)s)\prod_{l=1}^n(1-z_ls)^{n-1}}.
\end{equation}
We next compute the norm of $\tilde T^n(h_s)$
\begin{align}
&\|	\tilde T^n(h_s)(z)\|^p_{L^p_{\text{sym}}(\mathbb D^n,|J_{\mathbb C}\Phi_n|^{2})}\nonumber\\=&\int_{\mathbb D^n}\left|\sum_{\tau\in \mathcal S_n}\frac{d^{n-1}_{n-2}c_ns^{n(n-1)}}{\prod_{m=1}^{n-1}(1-\tau(z_m)s)\prod_{l=1}^n(1-z_ls)^{n-1}}\right|^p\prod_{1\leq j<k\leq n }| z_j- z_k|^2dV(z)\nonumber\\=&\int_{\mathbb D^n}\frac{d^{p(n-1)}_{n-2}c_n^ps^{pn(n-1)}}{\prod_{l=1}^n|1-z_ls|^{p(n-1)}}\left|\sum_{\tau\in \mathcal S_n}\frac{1}{\prod_{m=1}^{n-1}(1-\tau(z_m)s)}\right|^p\prod_{1\leq j<k\leq n }| z_j- z_k|^2dV(z).
\end{align}
Set $$U_n(s)=\left\{w\in \mathbb D: \text{Arg}(1-ws)\in \left(-\frac{\pi}{6(n-1)},\frac{\pi}{6(n-1)}\right)\right\}.$$ Then for any $z=(z_1,\dots,z_n)\in (U_n(s))^n$ and $\tau\in\mathcal S_n$, 
\[\text{Arg}\left\{\frac{1}{\prod_{m=1}^{n-1}(1-\tau(z_m)s)}\right\}\in \left(-\frac{\pi}{6},\frac{\pi}{6}\right),\]
which yields that
\[\left|\sum_{\tau\in \mathcal S_n}\frac{1}{\prod_{m=1}^{n-1}(1-\tau(z_m)s)}\right|\gtrsim \frac{1}{\prod_{m=1}^{n-1}|1-z_ms|}.\]
Using this inequality, we have
\begin{align}\label{4.29}
	&\|	\tilde T^n(h_s)(z)\|^p_{L^p_{\text{sym}}(\mathbb D^n,|J_{\mathbb C}\Phi_n|^{2})}\nonumber\\=&\int_{\mathbb D^n}\frac{d^{p(n-1)}_{n-2}c_n^ps^{pn(n-1)}}{\prod_{l=1}^n|1-z_ls|^{p(n-1)}}\left|\sum_{\tau\in \mathcal S_n}\frac{1}{\prod_{m=1}^{n-1}(1-\tau(z_m)s)}\right|^p\prod_{1\leq j<k\leq n }| z_j- z_k|^2dV(z)\nonumber\\\gtrsim&\int_{(U_n(s))^n}\frac{1}{\prod_{l=1}^n|1-z_ls|^{p(n-1)}}\left|\sum_{\tau\in \mathcal S_n}\frac{1}{\prod_{m=1}^{n-1}(1-\tau(z_m)s)}\right|^p\prod_{1\leq j<k\leq n }| z_j- z_k|^2dV(z)\nonumber\\\gtrsim&\int_{(U_n(s))^n}\frac{\prod_{1\leq j<k\leq n }| z_j- z_k|^2}{\prod_{m=1}^{n-1}|1-z_ms|^p\prod_{l=1}^n|1-z_ls|^{p(n-1)}}dV(z)\nonumber\\=&\int_{(U_n(s))^n}\frac{\prod_{1\leq j<k\leq n }| z_j- z_k|^2}{\prod_{l=1}^n|1-z_ls|^{2(n-1)}}\cdot\frac{1}{\prod_{m=1}^{n-1}|1-z_ms|^p\prod_{l=1}^n|1-z_ls|^{p(n-1)-2(n-1)}}dV(z).
\end{align}
By a similar $(n-1)$-step partial fraction procedure, we obtain the following analogue of (\ref{4.202})
\[\frac{\prod_{1\leq j<k\leq n }(z_j- z_k)}{\prod_{l=1}^n(1-z_ls)^{n-1}}=\sum_{(l_1,\dots,l_n)\in\mathcal S_n}\frac{\text{sgn}((l_1,\dots,l_n))s^{-\frac{1}{2}n(n-1)}}{\prod_{t=1}^n(1-z_{l_t}s)^{n-t}}.\]
Hence (\ref{4.29}) becomes
\begin{align}
	&\|	\tilde T^n(h_s)(z)\|^p_{L^p_{\text{sym}}(\mathbb D^n,|J_{\mathbb C}\Phi_n|^{2})}\nonumber\\\gtrsim&\int_{(U_n(s))^n}\frac{\prod_{1\leq j<k\leq n }| z_j- z_k|^2}{\prod_{l=1}^n|1-z_ls|^{2(n-1)}}\cdot\frac{dV(z)}{\prod_{m=1}^{n-1}|1-z_ms|^p\prod_{l=1}^n|1-z_ls|^{p(n-1)-2(n-1)}}\nonumber\\\gtrsim&\int_{(U_n(s))^n}\left|\sum_{(l_1,\dots,l_n)\in\mathcal S_n}\frac{\text{sgn}((l_1,\dots,l_n))}{\prod_{t=1}^n(1-z_{l_t}s)^{n-t}}\right|^2\frac{dV(z)}{\prod_{m=1}^{n-1}|1-z_ms|^p\prod_{l=1}^n|1-z_ls|^{p(n-1)-2(n-1)}}.
\end{align}
We further restrict our region of integration to obtain more precise estimates. 
For $j\in\{1,\dots,n\}$ and $s\in(1-(5n!)^{-2n},1)$, we set  \[U_n(s,j)=U_n(s)\bigcap\left\{z:(5n!)^{2j}(1-s)<\left|z-\frac{1}{s}\right|<1\right\},\]
and set
$\mathbf U(s)=U_n(s,1)\times U_n(s,2)\times\cdots\times U_n(s,n).$
It is worth noting that we implement a positive lower bound $1-(5n!)^{-2n}$ for $s$ here so that $U_n(s,j)$ is nonempty and $\mathbf U(s)$ is asymmetric in its components. As the reader will see, we need this extra restriction for $s$ but not when $n=2$ since in higher dimensions, the desired integral estimates cannot be achieved solely by Forelli-Rudin estimate. The asymmetry of $\mathbf U(s)$  is also used.

By a polar coordinate computation,
\begin{align}\label{4.19}\int_{U_n(s,j)}\frac{dV(z)}{|1-zs|^k}=&s^{-k}\int_{U_n(s,j)}\frac{dV(z)}{|z-s^{-1}|^k}\nonumber\\=&s^{-k}\int_{-\frac{\pi}{6(n-1)}}^{\frac{\pi}{6(n-1)}}\int_{(5n!)^{2j}(1-s)}^{1}r^{1-k}drd\theta\nonumber\\=&\begin{cases}
	\frac{\pi}{3s^k(k-2)(n-1)}((5n!)^{2j(2-k)}(1-s)^{2-k}-1)& k>2\\-\frac{\pi}{3s^2(n-1)}(2j\log 5n!+\log(1-s))& k=2
\end{cases}.\end{align}
For functions $f(s)$ and $g(s)$, we write $f(s)\sim g(s)$ if
\[\lim_{s\to 1^-}\frac{f(s)}{g(s)}=1.\]
Then (\ref{4.19}) yields
\begin{align}\label{4.31}\int_{U_n(s,j)}\frac{dV(z)}{|1-zs|^k}\sim	\begin{cases}
\frac{\pi(5n!)^{2j(2-k)}(1-s)^{2-k}}{3s^k(k-2)(n-1)}& k>2\\ -\frac{\pi\log(1-s)}{3s^2(n-1)}& k=2
\end{cases}.\end{align}
Recall that for $\tau\in \mathcal S_n$, we let $\tau(j)$ be the index satisfying $z_{\tau(j)}=\tau(z_j)$. For  $p=\frac{2n}{n-1}$, the triangle inequality and Cauchy-Schwarz  inequality implies
\begin{align}
&\int_{(U_n(s))^n}\left|\sum_{\tau\in\mathcal S_n}\frac{\text{sgn}((l_1,\dots,l_n))}{\prod_{t=1}^n(1-z_{\tau(t)}s)^{n-t}}\right|^2\frac{1}{\prod_{m=1}^{n-1}|1-z_ms|^p\prod_{l=1}^n|1-z_ls|^{p(n-1)-2(n-1)}}dV(z)\nonumber\\\gtrsim&\int_{\mathbf U(s)}\left|\sum_{\tau\in\mathcal S_n}\frac{\text{sgn}((l_1,\dots,l_n))}{\prod_{t=1}^n(1-z_{\tau(t)}s)^{n-t}}\right|^2\frac{1}{\prod_{m=1}^{n-1}|1-z_ms|^p\prod_{l=1}^n|1-z_ls|^{p(n-1)-2(n-1)}}dV(z)\nonumber\\\gtrsim&\int_{\mathbf U(s)}\left(\frac{1}{\prod_{t=1}^n|1-z_{t}s|^{2n-2t}}-\left|\sum_{\substack{\tau\in\mathcal S_n\\\tau\neq I}}\frac{\text{sgn}((l_1,\dots,l_n))}{\prod_{t=1}^n(1-z_{\tau(t)}s)^{n-t}}\right|^2\right)\nonumber\frac{dV(z)}{\prod_{m=1}^{n-1}|1-z_ms|^{\frac{2n}{n-1}}\prod_{l=1}^n|1-z_ls|^{2}}\nonumber\\\gtrsim&\int_{\mathbf U(s)}\left(\frac{1}{\prod_{t=1}^n|1-z_{t}s|^{2n-2t}}-\sum_{\substack{\tau\in\mathcal S_n\\\tau\neq I}}\frac{n!}{\prod_{t=1}^n|1-z_{\tau(t)}s|^{2n-2t}}\right)\nonumber\frac{dV(z)}{\prod_{m=1}^{n-1}|1-z_ms|^{\frac{2n}{n-1}}\prod_{l=1}^n|1-z_ls|^{2}}\nonumber\\=&\int_{\mathbf U(s)}\left(\frac{1}{\prod_{t=1}^n|1-z_{t}s|^{2n-2t}}-\sum_{\substack{\tau\in\mathcal S_n\\\tau\neq I}}\frac{n!}{\prod_{t=1}^n|1-z_ts|^{2n-2\tau^{-1}(t)}}\right)\nonumber\frac{dV(z)}{\prod_{m=1}^{n-1}|1-z_ms|^{\frac{2n}{n-1}}\prod_{l=1}^n|1-z_ls|^{2}}.
\end{align}

 We claim that in the integral above, the first term will dominate the rest terms, and thus determines the size of the entire integral. We start by showing that the first term dominates the sum of those terms with $\tau^{-1}(n)\neq n$. Note that
\begin{align}\label{4.32}&\int_{\mathbf U(s)}\frac{dV(z)}{\prod_{t=1}^n|1-z_ts|^{2n-2\tau^{-1}(t)}\prod_{m=1}^{n-1}|1-z_ms|^{\frac{2n}{n-1}}\prod_{l=1}^n|1-z_ls|^{2}}\nonumber\\=&\int_{\mathbf U(s)}\frac{dV(z)}{|1-z_ns|^{2n+2-2\tau^{-1}(n)}\prod_{m=1}^{n-1}|1-z_ms|^{\frac{2n^2}{n-1}+2-2\tau^{-1}(m)}}\nonumber\\=&\int_{U_n(s,n)}\frac{dV(z_n)}{|1-z_ns|^{2n+2-2\tau^{-1}(n)}}\prod_{m=1}^{n-1}\int_{U_n(s,m)}\frac{dV(z_m)}{|1-z_ms|^{\frac{2n^2}{n-1}+2-2\tau^{-1}(m)}}.\end{align}
Since $1\leq m\leq n-1$, the  denominator factor $|1-z_ms|$ in (\ref{4.32}) has power strictly greater than 2. The factor $|1-z_ns|$ has power 2 only if $\tau^{-1}(n)=n$, or equivalently $\tau(z_n)=z_n$. By the Forelli-Rudin estimates (\ref{**}) and the fact that $\{\tau^{-1}(1),\dots,\tau^{-1}(n)\}=\{1,\dots,n\}$,
\begin{align}\label{4.241}&\int_{U_n(s,n)}\frac{dV(z_n)}{|1-z_ns|^{2n+2-2\tau^{-1}(n)}}\prod_{m=1}^{n-1}\int_{U_n(s,m)}\frac{dV(z_m)}{|1-z_ms|^{\frac{2n^2}{n-1}+2-2\tau^{-1}(m)}}\approx\begin{cases}
(1-s)^{-n^2-n}& \tau(n)\neq n\\-\frac{\log(1-s)}{(1-s)^{n^2+n}}& \tau(n)=n
\end{cases}.\end{align}
Thus for $s$ sufficiently close to $1$, the integral in (\ref{4.32}) with $\tau(n)=n$ dominates the ones with $\tau(n)\neq n$.
Therefore, we can further assume that
\[\int_{\mathbf U(s)}\left(\frac{1}{2}\frac{1}{\prod_{t=1}^n|1-z_{t}s|^{2n-2t}}-\sum_{\substack{\tau\in\mathcal S_n\\\tau(n)\neq n}}\frac{n!}{\prod_{t=1}^n|1-z_ts|^{2n-2\tau^{-1}(t)}}\right)\nonumber\frac{dV(z)}{\prod_{m=1}^{n-1}|1-z_ms|^{\frac{2n}{n-1}}\prod_{l=1}^n|1-z_ls|^2}\geq0,\]
which implies
\begin{align}\label{4.34}
	&\int_{\mathbf U(s)}\left(\frac{1}{\prod_{t=1}^n|1-z_{t}s|^{2n-2t}}-\sum_{\substack{\tau\in\mathcal S_n\\\tau\neq I}}\frac{n!}{\prod_{t=1}^n|1-z_ts|^{2n-2\tau^{-1}(t)}}\right)\nonumber\frac{dV(z)}{\prod_{m=1}^{n-1}|1-z_ms|^{\frac{2n}{n-1}}\prod_{l=1}^n|1-z_ls|^{2}}\\\gtrsim&\left(\frac{1}{2}\prod_{m=1}^{n-1}\int_{U_n(s,m)}\frac{dV(z_m)}{|1-z_ms|^{\frac{2n^2}{n-1}+2-2m}}-\sum_{\substack{\tau\in\mathcal S_{n-1}\\\tau\neq I}}\prod_{m=1}^{n-1}\int_{U_n(s,m)}\frac{n!dV(z_m)}{|1-z_ms|^{\frac{2n^2}{n-1}+2-2\tau^{-1}(m)}}\right)\int_{U_n(s,n)}\frac{dV(z_n)}{|1-z_ns|^{2}}.
\end{align}

Now we turn to show that the positive term in the last line of (\ref{4.34}) also dominates the rest terms. Since all these terms share the same $z_n$ part, the estimate (\ref{4.241}) is no longer able to distinguish one from another. Thus here, we will make use of the asymmetry of $\mathbf U(s)$ in $z_j$ variables to prove the claim. 
	
	By  (\ref{4.31}), we have
\begin{align}
\prod_{m=1}^{n-1}\int_{U_n(s,m)}\frac{dV(z_m)}{|1-z_ms|^{\frac{2n^2}{n-1}+2-2\tau^{-1}(m)}}&\sim\prod_{m=1}^{n-1}\frac{\pi(5n!)^{2m(2\tau^{-1}(m)-\frac{2n^2}{n-1})}(1-s)^{2\tau^{-1}(m)-\frac{2n^2}{n-1}}}{3s^{\frac{2n^2}{n-1}+2-2\tau^{-1}(m)}(\frac{2n^2}{n-1}-2\tau^{-1}(m))(n-1)}\nonumber\\&=\frac{\pi^{n-1}(1-s)^{-n^2-n}(5n!)^{-2n^3}}{3^{n-1}s^{n^2+n+2}(n-1)^{n-1}}\prod_{m=1}^{n-1}\frac{(5n!)^{4m\tau^{-1}(m)}}{(\frac{2n^2}{n-1}-2\tau^{-1}(m))}
\nonumber\\&=\frac{\pi^{n-1}(1-s)^{-n^2-n}(5n!)^{-2n^3}}{3^{n-1}s^{n^2+n+2}(n-1)^{n-1}}\frac{(5n!)^{4\sum_{m=1}^{n-1}m\tau^{-1}(m)}}{\prod_{m=1}^{n-1}(\frac{2n^2}{n-1}-2m)}.
\end{align}
Hence, for any permutation $\tau\in \mathcal S_{n-1}$ with $\tau\neq I$,
\begin{align}
\frac{\prod_{m=1}^{n-1}\int_{U_n(s,m)}\frac{dV(z_m)}{|1-z_ms|^{\frac{2n^2}{n-1}+2-2m}}}{\prod_{m=1}^{n-1}\int_{U_n(s,m)}\frac{dV(z_m)}{|1-z_ms|^{\frac{2n^2}{n-1}+2-2\tau^{-1}(m)}}}&\sim(5n!)^{4\sum_{m=1}^{n-1}(m^2-m\tau^{-1}(m))}\geq (4n!)^4.
\end{align}
Here $\sum_{m=1}^{n-1}(m^2-m\tau^{-1}(m))\geq 1$ follows by Cauchy-Schwarz inequality and the fact that the sum $\sum_{m=1}^{n-1}(m^2-m\tau^{-1}(m))$ is an integer.
Substituting these estimates into (\ref{4.34}), we finally obtain
\begin{align}
&\int_{U_n(s,n)}\frac{dV(z_n)}{|1-z_ns|^{2}}\left(\frac{1}{2}\prod_{m=1}^{n-1}\int_{U_n(s,m)}\frac{dV(z_m)}{|1-z_ms|^{\frac{2n^2}{n-1}+2-2m}}-\sum_{\tau\in\mathcal S_{n-1}}\prod_{m=1}^{n-1}\int_{U_n(s,m)}\frac{n!dV(z_m)}{|1-z_ms|^{\frac{2n^2}{n-1}+2-2\tau^{-1}(m)}}\right)\nonumber\\\gtrsim& \int_{U_n(s,n)}\frac{dV(z_n)}{|1-z_ns|^{2}}\prod_{m=1}^{n-1}\int_{U_n(s,m)}\frac{dV(z_m)}{|1-z_ms|^{\frac{2n^2}{n-1}+2-2m}}\left(\frac{1}{2}-\sum_{\tau\in\mathcal S_{n-1}}\frac{n!}{(4n!)^{4}}\right)
\nonumber\\\geq&\frac{1}{4} \int_{U_n(s,n)}\frac{dV(z_n)}{|1-z_ns|^{2}}\prod_{m=1}^{n-1}\int_{U_n(s,m)}\frac{dV(z_m)}{|1-z_ms|^{\frac{2n^2}{n-1}+2-2m}}\approx-(1-s)^{-n^2-n}\log(1-s),
\end{align}
which implies that $\|	\tilde T^n(h_s)\|^p_{L^p_{\text{sym}}(\mathbb D^n,|J_{\mathbb C}\Phi_n|^{2})}\gtrsim -(1-s)^{-n^2-n}\log(1-s)$.
Thus
\[\frac{\|	\tilde T^n(h_s)\|^p_{L^p_{\text{sym}}(\mathbb D^n,|J_{\mathbb C}\Phi_n|^{2})}}{\|h_s\|^p_{L^p_{\text{sym}}(\mathbb D^n,|J_{\mathbb C}\Phi_n|^{2})}}\gtrsim-\log(1-s)\to \infty\]
as $s\to 1$, proving that  $\tilde T^n$ is unbounded on $L^p_{\text{sym}}(\mathbb D^n,|J_{\mathbb C}\Phi_n|^{2})$ for $p=\frac{2n}{n-1}$.
\end{proof}

\section{Some remarks}
\paragraph{1} In \cite{ZhenghuiWick3}, we studied weak-type estimates of the Bergman projection on the Hartogs triangle and showed the projection is of weak-type $(4,4)$ but not of weak-type $(\frac{4}{3},\frac{4}{3})$.  These results together with the Marcinkiewicz interpolation also recover the sharp $L^p$ regular range $(\frac{4}{3},4)$ for the projection on the Hartogs triangle.  Similarly, weak-type $(p,p)$ estimates of $P_{\mathbb G^n}$ when $p=\frac{2n}{n\pm1}$ could lead to an alternative approach for Theorem \ref{main}.
\vskip 10pt
\paragraph{2} In \cite{CJY}, Chen, Jin and Yuan obtained the Sobolev $L^p$ boundedness for $P_{\mathbb G}$ from $W^{k,p}(\mathbb G)$ to some weighted $W^{k,p}$ spaces for $p>2$. With $L^p$ irregularity results obtained for $P_{\mathbb G^n}$, it would be interesting to investigate the $W^{k,p}$ (ir)regularity for $P_{\mathbb G^n}$. In addition to estimates for  $P_{\mathbb G^n}$, one may further consider $L^p$ boundedness and compactness of operators that are related to the Bergman projection, such as Toeplitz operators and Hankel operators.
\vskip 10pt
\paragraph{3} The symmetrized polydisc $\mathbb G^n$ can be viewed as the quotient domain $\mathbb D^n/\mathcal S_n$ where $\mathcal S_n$ is the group of permutations of variables acting on $\mathbb D^n$.  It is interesting to see whether our method can be generalized to obtain similar results on other quotient domains of $\mathbb D^n$. For instance, the $L^p$ norm of $P_{\mathbb G^n}$ is equivalent to the $L^p$ norm of $P_{\mathbb D^n}$ over ${L^p_{\text{anti}}(\mathbb D^n,|J_{\mathbb C}\Phi_n|^{2-p})}$, a subspace of $L^p(\mathbb D^n,|J_{\mathbb C}\Phi_n|^{2-p})$ that is related to $\mathcal S_n$. On this subspace, we are able to construct the operator $T^n_2$ which equals $P_{\mathbb D^n}$.  It is interesting to see if such a proving strategy can be abstracted to work for general quotient domains.

\bibliographystyle{alpha}
\bibliography{2} 
\end{document}